\documentclass[reqno, 11pt]{amsart}

\numberwithin{equation}{section}

\usepackage{amssymb}
\usepackage{mathrsfs}
\usepackage{enumerate, xspace}
\usepackage{changebar}
\usepackage{hyperref}
\usepackage{pdfsync}
\usepackage{bm}
\usepackage{amsmath}

\hfuzz=15pt

\textwidth 145mm
 \textheight 218mm
 \topmargin 0mm
 \oddsidemargin 30pt
 \evensidemargin 30pt


\newtheorem{thm}{Theorem}[section]
\newtheorem{lem}[thm]{Lemma}
\newtheorem{cor}[thm]{Corollary}

\newtheorem{prop}[thm]{Proposition}

\newtheorem{rem}[thm]{Remark}

\newcommand\cA{{\mathcal A}}

\newcommand\cC{{\mathcal C}}
\newcommand\cD{{\mathcal D}}
\newcommand\cE{{\mathcal E}}
\newcommand\cF{{\mathcal F}}
\newcommand\cG{{\mathcal G}}
\newcommand\cH{{\mathcal H}}

\newcommand\cL{{\mathcal L}}

\newcommand\cS{{\mathcal S}}


\newcommand\bR{{\mathbb R}}

\newcommand\ve{{\varepsilon}}

\newcommand\bdot{{\mathop{\odot}}}
\newcommand\tdot{{\mathop{\tilde\odot}}}

\begin{document}

\title[Isothermal limits in a temperature gradient]
{Non-equilibrium Isothermal transformations in a temperature
  gradient from a microscopic dynamics}

\author{Viviana Letizia}
 \address{Viviana Letizia\\
CEREMADE, UMR CNRS 7534\\
Universit\'e Paris-Dauphine, PSL\\
75775 Paris-Cedex 16, France}
 \email{{\tt letizia@ceremade.dauphine.fr}}

\author{Stefano Olla}
\address{Stefano Olla\\
CEREMADE, UMR CNRS 7534\\
Universit\'e Paris-Dauphine, PSL\\
75775 Paris-Cedex 16, France}
\email{{\tt olla@ceremade.dauphine.fr}}

\date{\today  {\bf File: {\jobname}.tex.}
} 

\begin{abstract}
We consider a chain of anharmonic oscillators immersed in a heat bath
with a temperature gradient and a time-varying tension applied to
one end of the chain while the other side is fixed to a point. 
We prove that under diffusive space-time rescaling the volume strain
distribution of the chain evolves following a non-linear diffusive
equation. The stationary states of the dynamics are of
non-equilibrium and have a positive entropy production, so the
classical relative entropy methods cannot be used. We develop new
estimates based on entropic hypocoercivity, that allow to control the
distribution of the position configurations of the chain.
The macroscopic limit can be used to model isothermal thermodynamic
transformations between non-equilibrium stationary states.  
\end{abstract}
\thanks{This work has been partially supported by the
  European Advanced Grant {\em Macroscopic Laws and Dynamical Systems}
  (MALADY) (ERC AdG 246953), and by the CAPES
    and CNPq program \emph{Science Without Borders}.}
\keywords{Hydrodynamic limits, relative entropy, hypocoercivity,
  non-equilibrium stationary states, isothermal transformations,
  Langevin heat bath}
\subjclass[2000]{60K35,82C05,82C22,35Q79}
\maketitle

\section{Introduction}
\label{sec:introduction}

Macroscopic isothermal thermodynamic transformations can be modeled
microscopically by putting a system in contact with Langevin heat bath
at a given temperature $\beta^{-1}$. In \cite{olla2014micro2} a chain
of $n$ anharmonic oscillators is \emph{immersed} in a heat bath of
Langevin thermostats acting independently on each
particle. Macroscopically equivalent isothermal dynamics is obtained
by elastic collisions with an external gas of independent particles
with maxwellian random velocities with variance $\beta^{-1}$.    
The effect is to quickly renew the velocities distribution of the
particles, so that at any given time it is very close to a maxwellian at
given temperature. The chain is pinned only on one side, while at the
opposite site a force (tension) $\tau$ is acting. The equilibrium
distribution is characterized by the two control parameters
$\beta^{-1}, \tau$ (temperature and tension). The total length and the
energy of the system in equilibrium are in general non-linear functions
of these parameters given by the standard thermodynamic relations. 

By changing the tension $\tau$ applied to the system, a new
equilibrium state, with the same temperature $\beta^{-1}$, will be eventually
reached. For large $n$, while the heat bath
equilibrates the velocities at the corresponding temperature at time of order 1, 
the system converges to this global equilibrium length at a time scale of order $n^2t$.
In \cite{olla2014micro2} it is proven that the length stretch of the
system evolves in a diffusive space-time scale, i.e. after a scaling
limit the empirical distribution of the interparticle distances
converges to the solution of a non-linear diffusive equation governed
by the local tension. Consequently this diffusive equation describes
the non-reversible isothermal thermodynamic transformation from one
equilibrium to another with a different tension. By a further
rescaling of the time dependence of the changing tension,
a so called \emph{quasi-static} or \emph{reversible} isothermal
transformation  is obtained. Corresponding Clausius equalities/inequalities
relating work done and change in free energy can be proven.   

The results of \cite{olla2014micro2} summarized above concern isothermal
transformations from an \emph{equilibrium} state to another, by
changing the applied tension. In this article we are interested in transformations
between \emph{non-equilibrium} stationary states. We now consider the
chain of oscillators immersed in a heat bath with a \emph{macroscopic
  gradient} of temperature: each particle is in contact with
thermostats at a different temperature. These temperatures slowly change
from a particle to the neighboring one. A tension $\tau$ is again
applied to the chain. In the stationary state, that is now
characterized by the tension $\tau$ and the profile of temperatures
$\beta_1^{-1}, \dots, \beta_{n}^{-1}$, there is a continuous flow of
energy through the chain from the hot thermostats to the cold
ones. Unlike the equilibrium case, the probability distribution of 
the configurations of the chain in the stationary state cannot be
computed explicitly. 

By changing the applied tension we can obtain transitions from a
non-equilibrium stationary state to another, that will happen in a
diffusive space-time scale as in the equilibrium case. The main result
in the present article is that these transformations are again governed by a
diffusive equation that takes into account the local temperature
profile. The free energy can be computed according to the local
equilibrium rule and its changes during the transformation satisfy the
Clausius inequality with respect to the work done. This provides a
mathematically precise example for understanding non-equilibrium
thermodynamics from microscopic dynamics. 

The results in \cite{olla2014micro2} where obtained by using
the relative entropy method, first developed by H.T.Yau in
\cite{yau1991relative} for the Ginzburg-Landau dynamics, which is just
the over-damped version of the bulk dynamics of the oscillators
chain. The relative 
entropy method is very powerful and flexible, and was already applied
to interacting Ornstein-Uhlenbeck particles in the PhD thesis of
Tremoulet \cite{Tremoulet:2002p20753} as well as many other cases, in
particular in the hyperbolic scaling limit for Euler equation in the
smooth regime \cite{Olla:1993p69,BraxmeierEven:2014p16453}. 
This method consists in
looking at the time evolution of the relative
entropy of the distribution of the particle with respect to the local
Gibbs measure parametrized by the non-constant tension profile
corresponding to the solution of the macroscopic diffusion equation.
  The point of the method is in proving that the time derivative of
  such relative entropy is small, so that the relative entropy itself
  remains small with respect to the size of the system and local
  equilibrium, in a weak but sufficient form, propagates in time. 
In the particular applications to interacting  Ornstein-Uhlenbeck particles
\cite{Tremoulet:2002p20753,olla2014micro2}, the local Gibbs measure
needs to be corrected by a small recentering of the damped velocities due
to the local gradient of the tension.  
  
The relative entropy method seems to fail when the stationary measures
are not the equilibrium Gibbs measure, like in the present case. The
reason is that when taking the time derivative of the relative entropy
mentioned above, a large term, proportional to the gradient of the
temperature, appears. This term is related to the \emph{entropy 
production} of the stationary measure. Consequently we could not apply
the relative entropy method to the present problem.

A previous method was developed by Guo, Papanicolaou and Varadhan in
\cite{GPV} for over-damped dynamics. In this approach the main step
in closing the macroscopic equation is the direct comparison of the
coarse grained empirical density in the microscopic and macroscopic
space scale. They obtain first a bound of the Dirichlet form (more
precisely called Fisher information) from the
time derivative of the relative entropy with respect to the
equilibrium stationary measures. This bound implies that the system is
close to equilibrium on a local microscopic scale, and that the
density on a large microscopic interval is close to the density in a
small macroscopic interval (the so called \emph{one} and \emph{two block estimates},
see \cite{kipnis1999scaling} chapter 5). 

In the over-damped dynamics considered in \cite{GPV}, the Dirichlet
form appearing in the time derivative of the relative entropy controls
the gradients of the probability distributions with respects to the
position of the particles. In the damped models, 
the Dirichlet form appearing in the time derivative of the relative
entropy controls only the gradients on the velocities of the probability distribution
 of the particles. 
In order to deal with damped models a different approach for
comparing densities on the different scales was
developed in \cite{olla1991scaling}, after the 
over-damped case in \cite{varadhan1991scaling}, based on Young
measures. Unfortunately this approach requires a control of higher
moments of the density that are difficult to prove for lattice models.  
Consequently we could not apply this method either in the present
situation. 

The main mathematical novelty in the present article is the use of
entropic hypocoercivity, inspired by \cite{Villani:hypo}. 
We introduce a Fisher information form $I_n$ associated to the vector
fields $\{\partial_{p_i} + \partial_{q_i}\}_{i=1,\dots,n}$, defined by
\eqref{eq:hfi}. By computing the time derivative of this Fisher
information form on the distribution at time $t$ of the
configurations, we obtain a uniform bound $I_n\le Cn^{-1}$. This
implies that, at the macroscopic diffusive time scale, velocity gradients
of the distribution are very close to positions gradients. 
This allows to obtain a bound on the Fisher
information on the positions from the bound on the Fisher information
on the velocities. At this point we are essentially with the same
information as in the over-damped model, and we proceed as in \cite{GPV}.     
Observe that the Fisher information $I_n$ we introduce in \eqref{eq:hfi} is more
specific and a bit different than the distorted Fisher information
used by Villani in  \cite{Villani:hypo}, in particular $I_n$ is more
degenerate. On the other hand the calculations, that are contained in
appendix D are less \emph{miraculous} than in \cite{Villani:hypo}, and
they are stable enough to control the effect of the boundary tension
and of the gradient of temperature. This also suggests that entropic
hypocoercivity seems to be the right tool in order to obtain explicit
estimates uniform in the dimension of the system.
  
Adiabatic thermodynamic transformations are certainly more difficult
to be obtained from microscopic dynamics, for some preliminary results
see
\cite{Olla:1993p69,BraxmeierEven:2014p16453,bernardin2011transport,OllSim2015}.
 Equilibrium fluctuations for the dynamics with constant temperature can be treated
as in \cite{Olla:2003p91}. The fluctuations in the case with a gradient of
temperature are non-equilibrium fluctuation, and we believe that can be
treated with the techniques of the present article together with those
developed in the over-damped case in \cite{yau-chang}.

Large deviations for the stationary measure also require some further
mathematical investigations, but we conjecture that the corresponding
quasi-potential functional (\cite{bertini2012thermodynamic}) is given
by the  free energy associated to the local Gibbs measure, without any
non-local terms, unlike the case of the simple exclusion process. 

The article is structured in the following way. In \autoref{sec:model}  we define
the dynamics and we state the main result (Theorem
\autoref{theo1}). In \autoref{sec:non-equil-therm} we discuss
the consequences for the thermodynamic transformations from a
stationary state to another, the Clausius inequality and the
quasi-static limit. In \autoref{sec:entropyb} are obtained the bounds
on the entropy and the various Fisher informations needed in the proof
of the hydrodynamic limit. In \autoref{sec:limitpoint} we show
that any limit point of the distribution of the empirical density on
strain of the volume is concentrated in the weak solutions of the
macroscopic diffusion equation. The compactness, regularity and
uniqueness of the corresponding weak solution, necessary to conclude
the proof, are proven in the first three appendices. Appendix D
contains the calculations and estimates for the time derivative of the
Fisher information $I_n$.

\section{The dynamics and the results}
\label{sec:model}

We consider a chain of $n$ coupled oscillators in one dimension. Each
particle has the same mass, equal to one. The configuration in the
phase space is described by $\{q_i,p_i, i = 1, \dots,n\}\in \bR^{2n}$.
The interaction between two particles $i$ and $i-1$ is described by
the potential energy $V(q_i-q_{i-1})$ of an anharmonic spring. The
chain is attached on the left to a fixed point, so we set $q_0 =0,
p_0=0$.  
 We call $\{r_i=q_i-q_{i-1}, i=1,\dots, n\}$ the interparticle
 distance. 

We assume V to be a positive smooth function,
that satisfy the following assumptions:
\begin{enumerate}[i)]
\item 
\begin{equation}\label{eq:V}
\lim_{|r|\to \infty} \frac{V(r)}{|r|}=\infty, 
\end{equation}
\item there exists a constant $C_2>0$ such that:
  \begin{equation}
    \label{eq:bv2}
    \sup_{r} |V''(r)|\leq C_2,
  \end{equation}
\item there exists a constant $C_1>0$ such that:
  \begin{equation}
    \label{eq:45}
    V'(r)^2 \le C_1\left(1+ V(r)\right)).
  \end{equation}
\end{enumerate}
In particular these conditions imply $|V'(r)| \le C_0 + C_2 |r|$ for some constant $C_0$. 
Notice that this conditions allows potentials that may grow 
like $V(r) \sim |r|^\alpha$ for large $r$, with $1< \alpha \le 2$.

Energy is defined by the following Hamiltonian function:
\begin{equation}
\cH 
:=\sum_{i=1}^n\left(\frac{p_i^2}{2}+V(r_i)\right) 
\end{equation}

The particle dynamics is subject to an interaction with an environment
given by Langevin heat bath at different temperatures $\beta_i^{-1}$. 
We choose $\beta_i$ as slowly varying on a macroscopic scale,
i.e. $\beta_i = \beta(i/n)$ for a given smooth strictly positive function
$\beta(x)$, $x\in [0,1]$ such that $\inf_{y\in [0,1]} \beta(y) \ge \beta_->0$.

The equations of motion are
given by
\begin{equation}\label{eq:eds1}
\begin{cases}
dr_i(t)=n^2(p_i(t)-p_{i-1}(t))dt\\
dp_i(t)=n^2(V'(r_{i+1}(t))-V'(r_i(t)))\; dt-n^2 \gamma p_i(t) dt +
n\sqrt{\frac{2\gamma}{\beta_i}}dw_i(t), \quad i=1,..,N-1\\
dp_n(t)=n^2(\bar{\tau}(t)-V'(r_n(t)))\;dt-n^2\gamma p_n(t)\; dt +
n\sqrt{\frac{2\gamma}{\beta_n}} dw_n(t) .
\end{cases}
\end{equation}
Here $\{w_i(t)\}_i$ are $n$-independent Wiener processes, $\gamma>0$ is
the coupling parameter with the Langevin thermostats. 
The time is rescaled according to the diffusive space-time scaling,
i.e. $t$ is the macroscopic time.
The tension $\tilde{\tau}=\tilde{\tau}(t)$ changes at the
macroscopic time scale (i.e. very slowly in the microscopic time
scale). The generator of the diffusion is given by 
\begin{equation}\label{eq:gen}
\cL^{\bar{\tau}(t)}_n:= n^2 \cA^{\bar{\tau}(t)}_n+n^2\gamma \cS_n,
\end{equation}
where $\cA^{\bar{\tau}}_n$ is the Liouville generator 
\begin{equation}
A_n^{\bar{\tau}}=\sum_{i=1}^n (p_i-p_{i-1})\partial_{r_i} +\sum_{i=1}^{n-1}(V'(r_{i+1})-V'(r_i))\partial_{p_i}+(\bar{\tau}-V'(r_n))\partial_{p_n}
\end{equation}
while $S_n$ is the operator 
\begin{equation}
S_n=\sum_{i=1}^n\left(\beta^{-1}_i\partial_{p_i}^2-p_i\partial_{p_i}\right)
\end{equation}


\subsection{Gibbs measures}
For $\bar{\tau}(t)=\tau$ constant, and $\beta_i=\beta$ homogeneous, the system has a unique invariant
probability measure given by a product of invariant Gibbs
measures $\mu_{\tau,\beta}^n$:
\begin{equation}
d\mu_{\tau,\beta}^n =\prod_{i=1}^n 
e^{-\beta(\mathcal{E}_i-\tau r_i)-\mathcal{G}(\tau,\beta)} \; dr_idp_i 
\end{equation}
where $\cE_i$ is the energy of the particle $i$:
\begin{equation}
\mathcal{E}_i=\frac{p_i^2}{2}+V(r_i).
\end{equation}
The function $\mathcal{G}(\tau,\beta)$ is the Gibbs potential defined as:
\begin{equation}
\mathcal{G}(\tau,\beta) =\log{\left[\sqrt{2\pi\beta^{-1}}\int
    e^{-\beta(V(r)-\tau r)} dr \right]}.
\end{equation}
Notice that, thanks to condition \eqref{eq:V}, $\mathcal{G}(\tau,\beta)$ is finite for any $\tau\in \mathbb R$ 
and any $\beta>0$. Furthermore it is strictly convex in $\tau$.

The free energy of the equilibrium state $(r,\beta)$ is given by the
Legendre transform of $\beta^{-1}\cG(\tau,\beta)$:
\begin{equation}
\cF(r,\beta) = \sup_{\tau}\{ \tau r -\beta^{-1}\cG(\tau,\beta)\}
\end{equation}
The corresponding convex conjugate variables are the equilibrium average length
\begin{equation}\label{grandpot}
\mathfrak{r}(\tau,\beta)=\beta^{-1}\partial_\tau \cG(\tau,\beta)
\end{equation} 
and the tension 
\begin{equation}
\bm{\tau}(r,\beta)=\partial_r \cF(r,\beta).
\end{equation} 
Observe that 
\begin{equation}
\mathbb{E}_{\mu_{\tau,\beta}^n}[r_i]=\mathfrak{r}(\tau, \beta),
\qquad
\mathbb{E}_{\mu_{\tau,\beta}^n}[V'(r_i)]= \tau.
\end{equation}
\\


\subsection{The hydrodynamic limit}

 We assume that for a given initial profile $r_0(x)$
the initial probability distribution satisfies:
\begin{equation}\label{eq:inprof}
\frac{1}{n}\sum_{i=1}^n G(i/n) r_i(0)\ \mathop{\longrightarrow}_{n\to\infty}\ 
 \int_0^1 G(x) r_0(x) dx  \qquad \text{in probability} 
\end{equation}
for any continuous test function $G\in \cC_0([0,1])$.
We expect that this same convergence happens at the macroscopic time $t$:
\begin{equation}
\frac{1}{n}\sum_{i=1}^n G(i/n) r_i(t)\longrightarrow \int_0^1 G(x) r(x,t)dx
\end{equation}
where $r(x,t)$ satisfies the following
diffusive equation
\begin{equation}\label{eqn:hydro}
\begin{cases}
&\partial_t r(x,t)=\frac{1}{\gamma}\partial_x^2 \bm{\tau}(r(x,t),
\beta(x)) \qquad 
\mbox{for} \qquad x\in[0,1]\\
& \partial_x \bm\tau(r(t,x),\beta(x))|_{x=0} = 0, \quad \bm\tau (r(t,x),
\beta(x)) |_{x=1} =\bar{\tau}(t),
\quad t>0\\
&r(0,x) = r_0(x), \quad x\in[0,1]
\end{cases}
\end{equation}

We say that $r(x,t)$ is a weak solution of \eqref{eqn:hydro} if for
any smooth function $G(x)$ on $[0,1]$ such that $G(1)=0$ and $G'(0)=0$
we have
\begin{equation}
  \label{eq:weak}
  \int_0^1 G(x) \left(r(x,t) - r_0(x)\right) dx = \gamma^{-1}\int_0^t
  ds  \left[\int_0^1 G''(x) \mathbf{\tau}(r(x,s), \beta(x)) dx -
    G'(1) \bar\tau(s) \right].
\end{equation}
In appendix C we prove that the weak solution is unique in the class
of functions such that:  
  \begin{equation}
    \label{eq:2a}
    \int_0^t ds \int_0^1 \left(\partial_x\tau(r(x,s),
      \beta(x))\right)^2 dx < +\infty .
  \end{equation}

Let $\nu^n_{\beta_\cdot}$ the inhomogeneous Gibbs measure
\begin{equation}
d\nu^n_{\beta_\cdot}= \prod_{i=1}^n \frac{e^{-\beta_i\cE_i}}{{Z_{\beta_i}}}
\end{equation}
Observe that this is \textbf{not} the stationary measure for the
dynamics defined by \eqref{eq:eds1} and~\eqref{eq:gen} for $\bar\tau = 0$.

Let $f^n_t$ the density, with respect to $\nu^n_{\beta_\cdot}$, of the
probability distribution of the system at time t, i.e. the solution of
\begin{equation}
  \label{eq:fk}
  \partial_t f^n_t = \mathcal L_n^{\bar\tau(t),*} f^n_t ,
\end{equation}
where $\mathcal L_n^{\bar\tau(t),*}$ is the adjoint of 
$\mathcal L_n^{\bar\tau(t)}$ with respect to $\nu^n_{\beta_\cdot}$,
i.e. explicitly 
\begin{equation}
\cL_n^{\bar{\tau}(t),*}= - n^2 \cA_n^{\tau(t)} - n\sum_{i=1}^{n-1} \nabla_n
\beta(i/n) p_iV'(r_{i+1}) + n^2\beta(1)p_n\bar{\tau} + n^2\gamma
\cS_n,
\label{eq:adjointop}
\end{equation}
where 
\begin{equation}
\nabla_n \beta(i/n) =  n\left(\beta\left(\frac{i+1}{n}\right) - \beta\left(\frac in\right)\right), \qquad i=1, \dots, n-1 .
\label{eq:39}
\end{equation}

Define the relative entropy of $f^n_td\nu^n_{\beta_\cdot}$ with respect
to $d\nu^n_{\beta_\cdot}$ as:
\begin{equation}
H_n (t) = \int f^n_t\log{f_t^n} d\nu^n_{\beta_\cdot}.
\end{equation}
We assume that the initial density $f^n_0$ satisfy the bound
\begin{equation}
  \label{eq:31}
  H_n (0) \le Cn.
\end{equation}
We also need some regularity of  $f^n_0$: define the hypercoercive
Fisher information functional:
\begin{equation}
  \label{eq:hfi}
  \begin{split}
    I_n(t) &= \sum_{i=1}^{n-1} \beta^{-1}_i \int
    \frac{\left( (\partial_{p_i} 
        + \partial_{q_i} ) f^n_t\right)^2}{f^n_t} 
   d\nu_{\beta\cdot} 
  \end{split}
\end{equation}
where $\partial_{q_i} = \partial_{r_{i}} - \partial_{r_{i+1}},
i=1,\dots,n-1$, and $\nu_{\beta\cdot}:=\nu^n_{\beta\cdot}$. We assume that
\begin{equation}
  \label{eq:32}
  I_n(0) \le K_n 
\end{equation}
with $K_n$ growing less than exponentially in $n$. We will show in
Appendix D that for any $t>0$ we have $I_n(t) \le Cn^{-1}$. 

Furthermore we assume that 
\begin{equation}\label{eq:incond}
\lim_{n\to\infty} \int \left|\frac{1}{n}\sum_{i=1}^n
    G\left(\frac{i}{n}\right)r_i -\int_0^1G(x){r_0}(x) dx
  \right| f^n_0 d\nu_{\beta\cdot} = 0
\end{equation}
for any continuous test function $G\in \cC_0([0,1])$.

\begin{thm}\label{theo1}
Assume that the starting initial distribution satisfy the above
conditions. 
Then
\begin{equation}
\lim_{n\to \infty}\int \left|\frac{1}{n}\sum_{i=1}^n
    G\left(\frac{i}{n}\right)r_i -\int_0^1G(x){r(x,t)} dx
  \right| f^n_t d\nu_{\beta\cdot} = 0, 
\end{equation}
where $r(x,t)$ is the unique weak solution of (\ref{eqn:hydro})
satisfying \eqref{eq:2a}.
\end{thm}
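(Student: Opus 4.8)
The plan is to follow the classical architecture of Guo–Papanicolaou–Varadhan, as adapted to damped chains, with the entropic hypocoercivity bound $I_n(t)\le Cn^{-1}$ playing the role that the velocity-gradient Fisher information bound cannot play by itself. First I would use the entropy inequality and the bound $H_n(t)\le Cn$ (obtained in \autoref{sec:entropyb} from \eqref{eq:31} via the computation of $\frac{d}{dt}H_n$ using the explicit adjoint \eqref{eq:adjointop}; the dangerous term proportional to $\nabla_n\beta$ is controlled by $I_n$ and an integration by parts) to show tightness of the laws of the empirical strain field $\pi^n_t(dx)=\frac1n\sum_i r_i(t)\,\delta_{i/n}(dx)$ in an appropriate Skorokhod space, so that limit points exist. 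The entropy bound, together with the equivalence of ensembles and the superlinear growth \eqref{eq:V}, gives the uniform integrability needed to pass from convergence in probability to the $L^1(f^n_t d\nu)$ convergence in the statement, so it suffices to identify every limit point with the unique weak solution of \eqref{eqn:hydro} satisfying \eqref{eq:2a}.

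To identify the limit, I would start from the microscopic martingale: for a test function $G$ with $G(1)=0$, $G'(0)=0$, Dynkin's formula applied to $\frac1n\sum_i G(i/n)r_i$ with generator $\mathcal L_n^{\bar\tau(t)}$ produces, after two discrete summations by parts, the drift term $\gamma^{-1}\frac1n\sum_i \Delta_n G(i/n)\,V'(r_{i+1}(t))$ plus the boundary term $-\gamma^{-1}G'(1)\bar\tau(t)$ (here the $n^2$ diffusive speed-up exactly compensates the $n^{-2}$ from the discrete Laplacian, and the $\gamma p_i$ friction term combines with the current after one more summation by parts — this is where the factor $1/\gamma$ appears). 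The martingale part is $O(n^{-1/2})$ in $L^2$ and vanishes. Comparing with the desired weak form \eqref{eq:weak}, the whole problem reduces to the \emph{replacement} (Boltzmann–Gibbs) step: showing that in the limit one may replace the microscopic current $V'(r_{i+1}(t))$ by $\bm\tau(\bar r,\beta(i/n))$ where $\bar r$ is the empirically averaged strain over a mesoscopic block, i.e.
\begin{equation*}
\lim_{\varepsilon\to0}\limsup_{n\to\infty}\int_0^t ds\int \Big|\frac1n\sum_{i} H(i/n)\Big(V'(r_{i+1})-\bm\tau\big(r^{\varepsilon n}_i,\beta(i/n)\big)\Big)\Big| f^n_s\,d\nu_{\beta\cdot} = 0,
\end{equation*}
with $r^{\ell}_i=\frac1{2\ell+1}\sum_{|j-i|\le\ell}r_j$.

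The replacement is proven by the one-block and two-block estimates. The \emph{two-block estimate} (the coarse-grained strain on scale $\varepsilon n$ is close to that on any large but microscopic scale $k$) is standard once one has a uniform bound on a Fisher-type quantity controlling position gradients of $f^n_s$; and this is exactly what hypocoercivity buys us: the bound $I_n(s)\le Cn^{-1}$ says the vector fields $\partial_{p_i}+\partial_{q_i}$ have tiny Fisher information, while the usual $S_n$-Dirichlet form (bounded by $C/n$ from $\frac{d}{dt}H_n$) controls the $\partial_{p_i}$ directions, so by the triangle/parallelogram inequality the pure position gradients $\partial_{q_i}f^n_s$ are controlled with an $O(n^{-1})$ Fisher information — putting us in the same position as the overdamped GPV setting. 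The \emph{one-block estimate} replaces the block-averaged current by its conditional expectation given the empirical strain, i.e. by $\bm\tau$ evaluated at that strain; here the input is the same position-Fisher-information bound plus the local central limit theorem / equivalence of ensembles for the single-site measures $e^{-\beta_i\mathcal E_i}$, using that $\beta_i$ is essentially constant on a block since $\beta(\cdot)$ is smooth (the $O(1/n)$ variation of $\beta$ across a block of size $o(n)$ contributes negligibly, and the continuity of $\tau\mapsto\bm\tau(r,\beta)$ and of $\beta\mapsto\bm\tau(r,\beta)$, together with the moment bounds from \eqref{eq:V}, lets one replace $\beta_i$ by $\beta(i/n)$). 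Finally, the a priori energy estimate \eqref{eq:2a} for limit points follows from a separate bound — testing the dynamics against the current itself, or more cleanly from the entropy production / Fisher information bound already obtained — which then licenses the use of the uniqueness result of Appendix C to conclude that all limit points coincide with $r(x,t)$.

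The main obstacle is the passage from the velocity-gradient bound to a genuine \emph{position}-gradient Fisher information bound, i.e. making rigorous the heuristic "velocity gradients are close to position gradients." This rests entirely on the new hypocoercive estimate $I_n(t)\le Cn^{-1}$ proved in Appendix D, whose derivation must absorb both the boundary tension $\bar\tau(t)$ and the temperature gradient $\nabla_n\beta$ without generating an uncontrolled $O(1)$ or $O(n)$ term — this is precisely the point where the classical relative entropy method fails, and it is the technical heart of the argument. Everything downstream (one-block, two-block, martingale estimate, uniqueness) is then a careful but by-now-standard adaptation of \cite{GPV,kipnis1999scaling} to the non-equilibrium reference measure $\nu^n_{\beta\cdot}$.
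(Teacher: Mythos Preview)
Your proposal is correct and follows essentially the same architecture as the paper: entropy bound, hypocoercive Fisher bound $I_n\le Cn^{-1}$, derivation of a position-gradient Fisher bound $\int_0^t \cD^r_n(f_s)ds\le Cn^{-1}$, tightness, identification of limit points via the martingale decomposition plus the GPV one-block/two-block replacement, regularity \eqref{eq:2a}, and uniqueness.

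One small correction of logical order: you write that the $\nabla_n\beta$ term in $\tfrac{d}{dt}H_n$ is ``controlled by $I_n$ and an integration by parts''. In the paper this term is handled \emph{before} $I_n$ is available: one integrates by parts $p_i\mapsto\beta_i^{-1}\partial_{p_i}$, splits by Schwarz into the velocity Dirichlet form $\cD^p_n$ (absorbed into the dissipative term) and a potential piece $\sum V'(r_{i+1})^2$ controlled by the entropy inequality. The entropy bound $H_n(t)\le Cn$ and the velocity Fisher bound $\int_0^t\cD^p_n\le Cn^{-1}$ are therefore obtained first, and only then is Appendix~D invoked to get $I_n(t)\le Cn^{-1}$ (its proof actually \emph{uses} both the entropy bound, via \eqref{eq:energyb2}, and the time-integrated $\cD^p_n$ bound). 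Reversing the dependence as you did would be circular. Once this order is fixed, your sketch matches the paper's proof.
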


Furthermore a local equilibrium result is valid in the following sense:
consider a local function $\phi(\bf r,\bf p)$ such that for some positive
 finite constants $C_1, C_2$ we have the bound 

 \begin{equation}
|\phi(\mathbf r,\mathbf p) |\le C_1 \sum_{i\in\Lambda_\phi} (p_i^2 +
V(r_i))^\alpha +C_2, \qquad \alpha < 1 \label{eq:enb}
\end{equation}
where $\Lambda_\phi$ is the local support of $\phi$.
Let $k_\phi$ the length of $\Lambda_\phi$, and
let $\theta_i \phi$ be the shifted function, well defined for $k_\phi < i< n-k_\phi$, and define
  \begin{equation}
    \label{eq:13}
    \hat\phi(r,\beta) = \mathbb E_{\mu_{ \bm\tau(r,\beta) ,\beta}}\left(\phi\right).
  \end{equation}

\begin{cor}
\begin{equation}\label{eq:locequi}
\lim_{n\to \infty} \int \left|\frac1n \sum_{i=k_\phi+1}^{n-k_\phi}
    G\left(\frac{i}{n}\right)\theta_i\phi({\bf r},{\bf p}) -\int_0^1G(x)
    \hat\phi(r(x,t),\beta(x)) dx
  \right|f^n_t d\nu_{\beta\cdot} = 0, 
\end{equation}
\end{cor}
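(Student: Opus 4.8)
The plan is to deduce the corollary from \autoref{theo1} by the one-block/two-block scheme of Guo--Papanicolaou--Varadhan \cite{GPV} (see also \cite{kipnis1999scaling}, Chapter~5). The point is that the bounds of \autoref{sec:entropyb} and Appendix~D --- $H_n(t)\le Cn$, the velocity Fisher information bound produced by the entropy production, and the hypocoercive bound $I_n(t)\le Cn^{-1}$ --- provide at the macroscopic time $t$ the same input (smallness of a suitable Fisher information on the positions) that drives that argument in the over-damped case. Two preliminary reductions will be made. First, since $k_\phi$ is fixed, the indices within distance $k_\phi$ of the boundary contribute $O(k_\phi/n)$ to the sum in \eqref{eq:locequi}, so one may take $G$ compactly supported in $(0,1)$. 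Second, $\frac1n\sum_i\cE_i$ is bounded in $L^1(f^n_t\,d\nu_{\beta\cdot})$ uniformly in $n$ --- by $H_n(t)\le Cn$, the entropy inequality and the superlinear growth \eqref{eq:V} --- and since $\alpha<1$ in \eqref{eq:enb}, a uniform-integrability argument makes $\frac1n\sum_i(p_i^2+V(r_i))^\alpha$ uniformly integrable; as the same estimate controls $\hat\phi$, replacing $\phi$ by its truncation at level $M$ produces an error uniform in $n$ that vanishes as $M\to\infty$. It therefore suffices to prove \eqref{eq:locequi} for bounded local $\phi$.

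For the one-block step, put $\bar r_{i,k}=(2k+1)^{-1}\sum_{|j-i|\le k}r_j$ for $k\ge k_\phi$, and let $\langle\,\cdot\,\rangle_{k,\bar r,\beta}$ denote the canonical Gibbs average on a box of $2k+1$ oscillators at inverse temperature $\beta$ conditioned on mean stretch $\bar r$ --- with no conditioning on the velocities, since the thermostats relax them on the fast scale to the Maxwellian at the local temperature. Because $\beta(\cdot)$ is smooth, $\beta_j=\beta_i+O(k/n)$ throughout such a box, so $\beta_j$ can be frozen to $\beta_i$ at vanishing cost. The aim is
\begin{equation*}
  \limsup_{n\to\infty}\int\Bigl|\frac1n\sum_i G(i/n)\bigl(\theta_i\phi-\langle\phi\rangle_{k,\bar r_{i,k},\beta_i}\bigr)\Bigr|\,f^n_t\,d\nu_{\beta\cdot}\ \le\ \omega_1(k),
\end{equation*}
with $\omega_1(k)\to0$ as $k\to\infty$; as in \cite{GPV}, this follows from the smallness of the position Fisher information $\sum_i\int\frac{(\partial_{q_i}f^n_t)^2}{f^n_t}\,d\nu_{\beta\cdot}$ together with the fact that, on a fixed box, a density with vanishing Dirichlet form for the stretch-exchange fields $\{\partial_{q_i}\}$ and prescribed total stretch must be the canonical Gibbs one. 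This is the only genuinely new point: the thermostat part of the generator dissipates only the \emph{velocity} Fisher information, so the position Fisher information has to be recovered from the identity $\partial_{q_i}f^n_t=(\partial_{p_i}+\partial_{q_i})f^n_t-\partial_{p_i}f^n_t$ and the bound $I_n(t)\le Cn^{-1}$ of Appendix~D. I expect this to be the main obstacle, and it is precisely what entropic hypocoercivity is introduced to supply.

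The two-block step uses the same Fisher information bound to replace the microscopic block average $\bar r_{i,k}$ by the macroscopic one $\bar r_{i,\varepsilon n}$ up to an error $\omega_2(k,\varepsilon)$ with $\limsup_{\varepsilon\to0}\limsup_{k\to\infty}\omega_2(k,\varepsilon)=0$, and equivalence of ensembles for the chain closes it: conditions \eqref{eq:V}--\eqref{eq:45}, \eqref{eq:bv2} and the strict convexity of $\cG$ supply the uniform moment and continuity estimates needed for $\langle\phi\rangle_{k,\bar r,\beta}\to\mathbb{E}_{\mu_{\bm\tau(\bar r,\beta),\beta}}(\phi)=\hat\phi(\bar r,\beta)$ as $k\to\infty$, locally uniformly in $(\bar r,\beta)$.

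Combining the two steps, \eqref{eq:locequi} reduces to the convergence --- as $n\to\infty$ and then $\varepsilon\to0$ --- of $\frac1n\sum_i G(i/n)\,\hat\phi(\bar r_{i,\varepsilon n},\beta_i)$ to $\int_0^1 G(x)\hat\phi(r(x,t),\beta(x))\,dx$. For fixed $\varepsilon$, this is where \autoref{theo1} enters: upgraded through the tightness of the strain empirical measure (furnished by the uniform bound on $\frac1n\sum_i V(r_i)$), it yields convergence of that measure to $r(x,t)\,dx$, hence $\bar r_{i,\varepsilon n}$ converges to the average $r_\varepsilon(i/n,t)$ of $r(\cdot,t)$ over the interval of half-width $\varepsilon$ about $i/n$; the continuity of $\hat\phi$ and of $\beta(\cdot)$ then identifies the limit as $\int_0^1 G(x)\hat\phi(r_\varepsilon(x,t),\beta(x))\,dx$, and letting $\varepsilon\to0$, with $r_\varepsilon(\cdot,t)\to r(\cdot,t)$ thanks to the regularity \eqref{eq:2a} of the weak solution established in the appendices, recovers $\int_0^1 G(x)\hat\phi(r(x,t),\beta(x))\,dx$. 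A final routine check is that the non-equilibrium features --- the temperature-gradient term in \eqref{eq:adjointop} and the $O(1/n)$ recentering of the damped velocities already present in \cite{olla2014micro2} --- are of lower order on the microscopic scale and hence leave the local Gibbs structure inside a block unaffected in the limit.
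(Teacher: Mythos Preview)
The paper does not give a separate proof of this corollary; it is stated without argument immediately after Theorem~\ref{theo1}. The only local-equilibrium computation actually carried out is the special case $\phi = V'$ in Section~\ref{sec:limitpoint} (equation~\eqref{eq:6}), proved in time-integrated form by the GPV one-block/two-block scheme. Your proposal --- truncate to bounded $\phi$ via \eqref{eq:enb} and uniform integrability, freeze $\beta$ on microscopic blocks, run one-block/two-block using the position Fisher information recovered through hypocoercivity, apply equivalence of ensembles, and close with Theorem~\ref{theo1} --- is exactly the implicit argument the paper points to, and is correct in outline.

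One step should be tightened. You invoke smallness of the position Fisher information $\cD^r_n(f^n_t)$ \emph{at the fixed time} $t$, obtained from $\partial_{q_i}f^n_t=(\partial_{p_i}+\partial_{q_i})f^n_t-\partial_{p_i}f^n_t$ together with the pointwise bound $I_n(t)\le Cn^{-1}$. But the inequality at the end of the proof of Proposition~\ref{hypoentropy}, $\cD^r_n(f^n_t)\le 2\tilde\cD^p_n(f^n_t)+2Cn^{-1}$, still requires control of $\tilde\cD^p_n(f^n_t)$, and Proposition~\ref{hypoentropy} supplies only the time-integrated bound $\int_0^t\cD^p_n(f^n_s)\,ds\le Cn^{-1}$; no pointwise bound on $\cD^p_n(f^n_t)$ is established. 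Hence the one-block/two-block is strictly justified only in time-averaged form --- which is precisely how the paper states \eqref{eq:6}. To reach the fixed-time statement of the corollary you must either establish tightness in $\mathcal C([0,T])$ for the process $t\mapsto\frac1n\sum_i G(i/n)\theta_i\phi(t)$ (along the lines of Appendix~B, but for this observable rather than for $\pi^n$), or deduce convergence for almost every $t$ from the time-integrated estimate and then use continuity in $t$ of the macroscopic limit. This is a technicality rather than a conceptual gap, and the paper itself glosses over it, but as written your proposal over-claims what the available Fisher bounds actually deliver at a fixed time.
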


\section{Non-equilibrium thermodynamics}
\label{sec:non-equil-therm}

We collect in this section some interesting consequences of the main
theorem for the non-equilibrium thermodynamics of this system. All
statements contained in this section can be proven rigorously, except for
one that will require more investigation in the future. The aim is to
build a non equilibrium thermodynamics in the spirit of
\cite{bertini2013clausius,bertini2012thermodynamic}. The equilibrium
version of these results has been already proven in
\cite{olla2014micro2}. 

As we already mentioned, stationary states of our dynamics are not
given by Gibbs measures if a gradient in the temperature profile is
present, but they are still characterized by the tension $\bar\tau$
applied. We denote these stationary distributions as \emph{non-equilibrium stationary states} (NESS).
 Let us denote $f^n_{ss,\tau}$ the density of the stationary
distribution with respect to $\nu_{\beta_\cdot}$.

It is easy to see that
\begin{equation}
  \label{eq:15}
  \int V'(r_i) f^n_{ss,\tau}\nu_{\beta_\cdot} = \tau, \qquad i=1,\dots,n.
\end{equation}
In fact, since $\int p_i f^n_{ss,\tau}\nu_{\beta_\cdot} = 0$ and
\begin{equation*}
  \begin{split}
    n^{-2} \mathcal L_n^\tau p_i &= V'(r_{i+1}) - V'(r_i) - \gamma p_i,
    \quad i=1,\dots,n-1,\\
    n^{-2} \mathcal L_n^\tau p_n &= \tau - V'(r_n) - \gamma p_n,
  \end{split}
\end{equation*}
we have
\begin{equation*}
   0 = \int (V'(r_{i+1}) - V'(r_i))  f^n_{ss,\tau}\nu_{\beta_\cdot} =  \int (\tau - V'(r_n))  f^n_{ss,\tau}\nu_{\beta_\cdot}.
\end{equation*}
By the main theorem \ref{theo1}, there exists a stationary profile of stretch $r_{ss,\tau}(y) =
\mathfrak r(\tau, \beta(y))$ (defined by \eqref{grandpot}) 
such that  for any continuous test function $G$:
\begin{equation}
  \label{eq:38}
  \lim_{n\to \infty} \int \left|\frac{1}{n}\sum_{i=1}^n
    G\left(\frac{i}{n}\right)r_i -\int_0^1G(x)
    r_{ss,\tau}(x) dx \right|f^n_{ss,\tau} d\nu_{\beta_\cdot} = 0,
\end{equation}

In order to study the transition from one stationary state to another
with different tension, we start the system at time $0$ with a
stationary state with tension 
$\tau_0$, and we change tension with time, setting $\bar\tau(t) =
\tau_1$ for $t\ge t_1$. The distribution of the system will eventually
converge  to a stationary state with tension $\tau_1$. 
Let $r(x,t)$ be the solution of the
macroscopic equation \eqref{eq:weak} starting with $r_0(x) =  r_{ss,\tau_0}(x)$. 
Clearly $r(x,t) \to r_1(x) = r_{ss,\tau_1}(x)$, as $t\to\infty$.

\subsection{Excess Heat}
\label{sec:excess-heat}

The (normalized) total internal energy of the system is defined
by 
\begin{equation}
  \label{eq:14}
  U_n := \frac 1n \sum_{i=1}^n\left(\frac{p_i^2}{2}+V(r_i)\right) 
\end{equation}
It evolves as:
\begin{equation*}
   U_n(t) -  U_n(0) = \mathcal W_n(t) + Q_n(t)
\end{equation*}
where 
\begin{equation*}
  \mathcal W_n(t) = \int_0^t \bar\tau(s) np_n(s) ds = \int_0^t
  \bar\tau(s) \frac{dq_n(s)}n 
\end{equation*}
is the (normalized) work done by the force $\bar\tau(s)$ up to time
$t$, 
while
\begin{equation}\label{eq:heat}
  Q_n(t) = \gamma\; n\sum_{j=1}^n \int_0^t ds \left(p_j^2(s) -
    \beta_j^{-1}\right) + \sum_{j=1}^n \sqrt{2\gamma
    \beta^{-1}_j} \int_0^t p_j(s) dw_i(s).
\end{equation}
is the total flux of energy between the system and the heat bath
(divided by $n$). 
As a consequence of theorem \autoref{theo1} we have that 
\begin{equation*}
  \lim_{n\to\infty} \mathcal W_n(t) = \int_0^t \bar\tau(s) d\mathcal
  L(s)
\end{equation*}
  where $\mathcal L(t) = \int_0^1 r(x,t) dx$, the total macroscopic
  length at time $t$. While for the energy difference we expect that
  \begin{equation}\label{eq:unproven}
    \begin{split}
      \lim_{n\to\infty} \left(U_n(t) - U_n(0)\right) &= \int_0^1  
      \left[u({\bm \tau}(r(x,t),\beta(x)),\beta(x)) - 
        u(\tau_0, \beta(x))\right] dx
    \end{split}
  \end{equation}
where $u(\tau,\beta)$ is the average energy for
$\mu_{\beta,\tau}$, i.e.
\begin{equation*}
  u(\tau,\beta) = \int \mathcal E_1 d\mu^1_{\tau, \beta} = 
  \frac 1{2\beta} + \int V(r) e^{-\beta (V(r) - \tau r) -
    \mathcal{\tilde G}(\tau,\beta)} dr 
\end{equation*}
 with $\mathcal{\tilde G}(\tau,\beta) = \log \int e^{-\beta (V(r) -
 \tau r)} dr$. 
Unfortunately \eqref{eq:unproven} does not follow from \eqref{eq:locequi}, since 
\eqref{eq:enb} is not satisfied.
Consequently at the moment we do not have a rigorous proof of \eqref{eq:unproven}. 
In the constant temperature profile case,
treated in \cite{olla2014micro2}, this limit can be computed
rigorously thanks to the use on the relative entropy method
\cite{yau1991relative} that gives a better control on the local
distribution of the energy.


Since ${\bm \tau}(r(x,t),\beta(x)) \to \tau_1$ as $t\to\infty$, 
it follows that
$$
u({\bm \tau}(r(x,t),\beta(x)),\beta(x)) \to u(\tau_1,\beta(x))
$$
and the energy change will become
\begin{equation}
  \label{eq:1stprinciple}
 \int_0^1 \left(u(\tau_1,\beta(x))  - u(\tau_0,\beta(x))\right) dx =
 \int_0^{+\infty} \bar\tau(s) d\mathcal  L(s) ds  + Q = \mathcal W + Q
\end{equation}
where $Q$ is the limit of \eqref{eq:heat}, which is called \emph{excess
  heat}. So equation \eqref{eq:1stprinciple} is the expression of the
first principle of thermodynamics in this \emph{isothermal}
transformation between non--equilibrium stationary states. Here
\emph{isothermal} means that the profile of temperature does not
change in time during the transformation. 

\subsection{Free energy}
\label{sec:free-energy}

Define the \emph{free energy} associated to the macroscopic profile $r(x,t)$:
\begin{equation}
  \label{eq:16}
  \widetilde{\mathcal F}(t) = \int_0^1 \mathcal F(r(x,t), \beta(x)) dx.
\end{equation}
Correspondingly the free energy associated to the macroscopic stationary state is:
\begin{equation}
  \label{eq:18}
   \widetilde{\mathcal F}_{ss}(\tau) = \int_0^1 \mathcal F(r_{ss,\tau}(x),
   \beta(x)) dx 
\end{equation}
A straightforward calculation using \eqref{eq:weak} gives
\begin{equation}
  \label{eq:17}
  \begin{split}
    \widetilde{\mathcal F}(t) -  \widetilde{\mathcal F}_{ss}(\tau_0) = 
    \mathcal W(t) - \gamma^{-1} \int_0^t ds \int_0^1 \left( \partial_x
      \bm\tau(r(x,s),\beta(x))\right)^2 dx  
  \end{split}
\end{equation}
and after the time limit $t\to\infty$
\begin{equation}
  \label{eq:2ndprinciple}
  \begin{split}
    \widetilde{\mathcal F}_{ss}(\tau_1) -  \widetilde{\mathcal
      F}_{ss}(\tau_0) =  
    \mathcal W - \gamma^{-1} \int_0^{+\infty} dt \int_0^1 \left( \partial_x
      \bm\tau(r(x,t),\beta(x))\right)^2 dx  \\
    \le \mathcal W
  \end{split}
\end{equation}
  i.e. Clausius inequality for NESS. Notice that in the case $\beta_j$ constant, 
this is just the usual Clausius inequality (see \cite{olla2014micro2}).

\subsection{Quasi-static limit and reversible transformations}
\label{sec:quasi-static-limit-1}

The thermodynamic transformation obtained above from the stationary state at tension $\tau_0$ 
to the one at tension $\tau_1$ is an irreversible transformation, where the work done on 
the system by the external force is strictly bigger than the change in free energy. 

In thermodynamics the \emph{quasi-static} transformations are (vaguely) defined as those processes 
where changes are so slow such that the system is in \emph{equilibrium} at each instant of time.
In the spirit of \cite{bertini2013clausius} and \cite{olla2014micro2},
these quasi static transformations are precisely defined as a limiting process
by rescaling the time dependence of
the driving tension $\bar\tau$ by a small parameter $\varepsilon$,
i.e. by choosing $\bar\tau(\epsilon t)$. Of course the right time
scale at which the evolution appears is $\varepsilon^{-1}t$ and the
rescaled solution $\tilde r^\varepsilon(x,t) = r(x,\varepsilon^{-1}t)$
satisfy the equation
\begin{equation}
  \label{eq:edpquasistatic}
  \begin{cases}
&\partial_t \tilde r^\varepsilon(x,t)=\frac{1}{\epsilon\gamma}\partial_x^2
\bm{\tau}(\tilde r^\varepsilon(x,t), 
\beta(x)) \qquad 
\mbox{for} \qquad x\in[0,1]\\
& \partial_x \bm\tau(\tilde r^\varepsilon(t,x),\beta(x))|_{x=0} = 0, \quad
\bm\tau (\tilde r^\varepsilon(t,x), 
\beta(x)) |_{x=1} =\bar{\tau}(t),
\quad t>0\\
&\mathbf{\tau}(\tilde r^\varepsilon(0,x),\beta(x)) = \tau_0, \quad x\in[0,1]
\end{cases}
\end{equation}
By repeating the argument above, equation \eqref{eq:2ndprinciple} became:
\begin{equation}
  \label{eq:40}
   \begin{split}
    \widetilde{\mathcal F}_{ss}(\tau_1) -  \widetilde{\mathcal
      F}_{ss}(\tau_0) =  
    \mathcal W^\varepsilon - \frac 1{\epsilon\gamma} \int_0^{+\infty}
    dt \int_0^1 \left( \partial_x 
      \bm\tau(\tilde r^\varepsilon(x,t),\beta(x))\right)^2 dx
\end{split}
\end{equation}
By the same argument used in \cite{olla2014micro2} for $\beta$
constant, it can be proven that the last term on the right hand side of
\eqref{eq:40} converges to $0$ as $\varepsilon\to 0$, and that 
$\bm\tau(\tilde r^\varepsilon(x,t),\beta(x)) \to \bar\tau(t)$ for
almost any $x\in [0,1]$ and $t\ge 0$. 
Consequently in the quasi-static limit we have the Clausius equality
$$ 
\widetilde{\mathcal F}_{ss}(\tau_1)
-  \widetilde{\mathcal F}_{ss}(\tau_0) =  \mathcal W
$$
This implies the following equality for the heat in the quasi-static limit:
\begin{equation}
  \label{eq:19}
  Q = \int_0^1 \beta^{-1}(x)
  \left(S(r_{ss}(x,\tau_1),u_{ss}(x,\tau_1)) -
    S(r_{ss}(x,\tau_0),u_{ss}(x,\tau_0))\right) dx
\end{equation}
analogous of the equilibrium equality $Q=T\Delta S$.

In \cite{demasi} a direct quasi-static limit is obtained form the microscopic dynamics without passing through the
macroscopic equation \eqref{eq:weak}, by choosing a driving tension $\bar\tau$ that changes at a slower time scale.

\section{Entropy and hypercoercive bounds}
\label{sec:entropyb}

In this section we prove the bounds on the relative entropy and the different Fisher informations 
that we need in the proof of the hydrodynamic limit in section \autoref{sec:limitpoint}. These bounds provide 
a quantitative information on the closeness of the local distributions of the particles
 to some equilibrium measure.

In order to shorten formulas, we introduce here some vectorial
notation. Given two vectors $u=(u_1, \dots, u_n), v=(v_1, \dots, v_n)$, define
\begin{equation*}
 u \bdot v = \sum_{i=1}^{n} \beta^{-1}_{i} u_i v_i,\qquad  u
 \tilde\bdot v = \sum_{i=1}^{n-1} \beta^{-1}_{i} u_i v_i, \qquad
 |u|_\bdot^2 =  u \bdot u, \quad 
  |u|_{\tilde\bdot}^2 =  u \tilde\bdot u.  
\end{equation*}
We also use the notations
\begin{equation}
  \label{eq:23}
  \begin{split}
    \partial_p = (\partial_{p_1}, \dots, \partial_{p_n}) \qquad 
     \partial^*_p = (\partial^*_{p_1}, \dots,
     \partial^*_{p_n}), 
     \qquad \partial^*_{p_i} = \beta_i p_i - \partial_{p_i}\\
    \partial_q = (\partial_{q_1}, \dots, \partial_{q_n}),
    \qquad \partial_{q_i} = \partial_{r_i} - \partial_{r_{i+1}}, 
    \quad \partial_{q_n} = \partial_{r_n}.
       \end{split}
\end{equation} 
Observe that with this notations we can write
\begin{equation}
  \label{eq:25}
\mathcal S_n = - \partial^*_p \odot \partial_p, \qquad  
  \mathcal A_n^\tau = p\cdot \partial_q - \partial_q \mathcal V
  \cdot \partial_p  + \tau \partial_{p_n}
\end{equation}
where $\mathcal V = \sum_i V(r_i)$ and the $\cdot$ denotes the usual
scalar product in $\mathbb R^n$. 
Then we define the following Fisher informations forms on a 
probability density distribution (with respect to $\nu_{\beta\cdot}$):
\begin{equation}
  \label{eq:fisher}
  \begin{split}
    \cD^p_n(f) & = \int \frac{|\partial_p f|^2_{\bdot}}{f}  d\nu_{\beta\cdot}, \qquad
    \tilde{ \cD}^p_n(f)  = \int \frac{|\partial_p f|^2_{\tilde\bdot}}{f}  d\nu_{\beta\cdot}\\
    \cD^r_n(f) & = \int \frac{|\partial_q f|^2_{\tilde\bdot}}{f}  d\nu_{\beta\cdot}\\
    I_n(f) &= \int \frac{|\partial_p f+\partial_q f|^2_{\tilde\bdot}}{f} d\nu_{\beta\cdot} 
    = \tilde{\cD}^p_n(f) + \cD^r_n(f) +  2\int \frac{\partial_{q} f \tilde\bdot \partial_{p} f}{f} 
      d\nu_{\beta\cdot} \ge 0
  \end{split}
\end{equation}

\begin{prop}\label{hypoentropy}
 Let $f_t^n$ the solution of the forward equation \eqref{eq:fk}.
 Then there exist a constant $C$ such that
  \begin{equation}
    \label{eq:20}
   H_n(t) \le Cn, \qquad \int_0^t  \cD^p_n (f_s^n) ds \le \frac{C}{n}, \qquad
   \int_0^t  \cD^r_n (f_s^n) ds \le \frac{C}{n}.
  \end{equation}
\end{prop}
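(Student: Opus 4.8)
The plan is to differentiate the relative entropy $H_n(t)$ along the flow $\partial_t f^n_t = \cL_n^{\bar\tau(t),*} f^n_t$ and extract from the entropy production the Dirichlet form $\cD^p_n$, controlling all remaining terms by $H_n$ plus lower order, and then close the loop with a Gr\"onwall argument. Concretely, using the explicit form \eqref{eq:adjointop} of $\cL_n^{\bar\tau(t),*}$ and the identity $\cS_n = -\partial^*_p\odot\partial_p$, the antisymmetric Liouville part $-n^2\cA_n^{\tau(t)}$ contributes nothing to $\frac{d}{dt}H_n$ directly (it is $\nu_{\beta\cdot}$-skew only up to a boundary/gradient defect, see below), while the symmetric part $n^2\gamma\cS_n$ produces the negative term $-n^2\gamma\,\cD^p_n(f^n_t)$. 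What remains are: (i) the temperature-gradient term $-n\sum_i \nabla_n\beta(i/n)\,p_iV'(r_{i+1})$, and (ii) the boundary-tension term $n^2\beta(1)p_n\bar\tau(t)$, plus the defect from $\cA_n$ not being exactly skew-adjoint with respect to the \emph{inhomogeneous} measure $\nu_{\beta\cdot}$ (since $\nu_{\beta\cdot}$ is not stationary).

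**Main steps.** First, write $\frac{d}{dt}H_n(t) = \int (\log f^n_t+1)\,\cL_n^{\bar\tau(t),*}f^n_t\,d\nu_{\beta\cdot}$ and integrate by parts, so that $\frac{d}{dt}H_n(t) = -n^2\gamma\,\cD^p_n(f^n_t) + (\text{error terms})$. Second, bound each error term by something of the form $\delta n^2\gamma\,\cD^p_n(f^n_t) + C\delta^{-1}(H_n(t)+n)$, using: the elementary inequality $\int g\,\partial_{p_i}f\,d\nu \le \sqrt{\cD^p_n(f)}\,\big(\int g^2 f\,d\nu\big)^{1/2}$ together with Young's inequality; the growth bound $|V'(r)|\le C_0+C_2|r|$ from \eqref{eq:bv2}--\eqref{eq:45}; and the entropy inequality $\int |h| f\,d\nu \le \delta^{-1}(H_n(t) + \log\int e^{\delta |h|}d\nu)$ to convert averages of $p_i^2, V(r_i)$ against $f^n_t$ into $H_n(t)$ plus Gaussian/Gibbs moment generating terms that are $O(n)$ uniformly (this is where \eqref{eq:V} guaranteeing finiteness of $\cG$ is used). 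The factors of $n$ need bookkeeping: the gradient term carries $n$ but $\nabla_n\beta$ is $O(1)$, and the crucial point is that the cross terms $p_iV'(r_{i+1})$ can be split via Young so the "bad" half is absorbed into $n^2\gamma\cD^p_n$ with a small constant — here the extra factor $n$ is beaten by taking $\delta\sim$ const and noting $n\cdot\sqrt{\cD^p_n}\cdot\sqrt{n} \le \tfrac12 n^2\cD^p_n + \tfrac12$. The boundary term is a single site so costs only $O(n^2)\cdot(\text{one-particle quantity})$, absorbed similarly. Third, choosing the $\delta$'s small enough to leave, say, $-\tfrac12 n^2\gamma\,\cD^p_n(f^n_t)$ on the right, obtain the differential inequality $\frac{d}{dt}H_n(t) + \tfrac12 n^2\gamma\,\cD^p_n(f^n_t) \le C(H_n(t)+n)$. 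Gr\"onwall with \eqref{eq:31} gives $H_n(t)\le C'n$ on $[0,T]$, and then integrating the inequality back yields $\int_0^t \cD^p_n(f^n_s)\,ds \le C/n$.

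**The bound on $\cD^r_n$.** This is the genuinely new ingredient and should come from the hypocoercive Fisher information $I_n$, not from the entropy computation. Since $I_n(f) = \tilde\cD^p_n(f) + \cD^r_n(f) + 2\int \frac{\partial_q f\,\tilde\bdot\,\partial_p f}{f}d\nu_{\beta\cdot} \ge 0$, one has $\cD^r_n(f) \le 2 I_n(f) + 2\tilde\cD^p_n(f) \le 2I_n(f) + 2\cD^p_n(f)$ by expanding the square the other way ($|\partial_q f|^2 = |(\partial_q f+\partial_p f) - \partial_p f|^2 \le 2|\partial_q f+\partial_p f|^2 + 2|\partial_p f|^2$ pointwise, then divide by $f$ and integrate). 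Invoking the estimate $I_n(f^n_t)\le Cn^{-1}$ proved in Appendix D, and the bound on $\int_0^t\cD^p_n\,ds$ just obtained, gives $\int_0^t \cD^r_n(f^n_s)\,ds \le 2\int_0^t I_n(f^n_s)\,ds + 2\int_0^t\cD^p_n(f^n_s)\,ds \le 2Ct/n + 2C/n = C''/n$, which is the third claim.

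**Expected main obstacle.** The delicate point is the $n$-counting in controlling the error terms against $n^2\gamma\cD^p_n$: the temperature-gradient term has a spare factor of $n$ relative to what a naive Cauchy–Schwarz against $\sqrt{\cD^p_n}$ would absorb, and one must genuinely use that it is a sum of $n$ terms each involving a \emph{single} $\partial_{p_i}$ so that the Cauchy–Schwarz is applied site-by-site before summing — i.e. $\big|\sum_i \nabla_n\beta(i/n)\int p_i V'(r_{i+1}) \partial_{p_i} f\,d\nu\big| \le \big(\sum_i \beta_i^{-1}\int\frac{(\partial_{p_i}f)^2}{f}d\nu\big)^{1/2}\big(\sum_i \beta_i\|\nabla_n\beta\|_\infty^2\int p_i^2 V'(r_{i+1})^2 f\,d\nu\big)^{1/2}$, the first factor being $\sqrt{\cD^p_n}$ and the second being $O(\sqrt{H_n + n})$ after the entropy inequality and \eqref{eq:45}. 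Getting the powers of $n$ to match and the constants to be genuinely absorbable (rather than merely bounded) is the crux; everything else is the standard Yau-type scheme adapted to the fact that $\nu_{\beta\cdot}$ is not invariant, which is precisely what generates term (i) in the first place.
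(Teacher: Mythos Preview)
Your handling of the temperature-gradient term and of $\cD^r_n$ is essentially the paper's argument (after correcting the slip in your displayed formula, where a stray $p_i$ survives alongside $\partial_{p_i}f$; once you integrate by parts $\int p_i V'(r_{i+1}) f\,d\nu_{\beta\cdot}=\beta_i^{-1}\int V'(r_{i+1})\,\partial_{p_i}f\,d\nu_{\beta\cdot}$, the $p_i$ is gone). The gap is in the boundary tension term, and it is a real one.

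You write that the term $n^2\beta(1)\bar\tau(t)\int p_n f_t^n\,d\nu_{\beta\cdot}$ is ``a single site'' and can be ``absorbed similarly''. Carrying this out: integration by parts gives $\int p_n f\,d\nu_{\beta\cdot}=\beta_n^{-1}\int\partial_{p_n}f\,d\nu_{\beta\cdot}$, and Cauchy--Schwarz then yields $|\int p_n f\,d\nu_{\beta\cdot}|\le C\sqrt{\cD^p_n(f)}$. Young's inequality splits $Cn^2\sqrt{\cD^p_n}\le \tfrac{\gamma}{4}n^2\cD^p_n + C'n^2$, so after absorption you are left with an error of order $n^2$, not $n$. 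The resulting Gr\"onwall bound is only $H_n(t)\le Cn^2$, which is not enough: the Proposition claims $H_n(t)\le Cn$, and this scaling is genuinely used downstream (e.g.\ in the one/two-block estimates and in \eqref{eq:42}, where one divides by $n$).

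The paper's device is to avoid a pointwise-in-$t$ bound altogether. Since $n^2 p_n=\cL_n^{\bar\tau}q_n$ with $q_n=\sum_i r_i$, one has
\[
n^2\beta(1)\int_0^t\bar\tau(s)\!\int p_n f_s^n\,d\nu_{\beta\cdot}\,ds
=\beta(1)\int_0^t\bar\tau(s)\,\frac{d}{ds}\!\int q_n f_s^n\,d\nu_{\beta\cdot}\,ds,
\]
and an integration by parts \emph{in time} converts this into boundary terms $\bar\tau(t)\int q_n f_t^n\,d\nu_{\beta\cdot}-\bar\tau(0)\int q_n f_0^n\,d\nu_{\beta\cdot}$ plus $-\int_0^t\bar\tau'(s)\int q_n f_s^n\,d\nu_{\beta\cdot}\,ds$. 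Each $\int|q_n|f\,d\nu_{\beta\cdot}$ is then controlled by the entropy inequality and the finiteness of $\cG(\pm a_1,\beta_i)$ as $a_1^{-1}H_n+C(a_1)n$; see \eqref{eq:37}--\eqref{eq:5}. This is what brings the boundary contribution down to $O(H_n+n)$ and closes the Gr\"onwall loop at the right order. Without this time-integration trick the proof does not go through.
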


\begin{proof}

Taking the time derivative of the entropy we obtain:
\begin{equation}
\frac d{dt} H_n(t)= 
\int (\cL_n^{\bar{\tau}(t)})^* f_t^n \log{f_t^n}d\nu_{\beta_\cdot}
\end{equation}

So that, using \eqref{eq:adjointop}, we have
\begin{equation}\label{eq:dtH}
\begin{split}
\frac d{dt} H_n(t)&=\int f_t^n \cL_n^{\bar{\tau}(t)}\log{f_t^n}d\nu_{\beta_\cdot} 
=\int n^2 \cA_n^{\bar\tau(t)} f d\nu_{\beta_\cdot} - \gamma n^2
 \cD^p_n(f_t^n) \\
&=- n\sum_{i=1}^{n-1}\nabla_n \beta(i/n) \int V'(r_{i+1}) p_i  f_t^n
d\nu_{\beta_\cdot} + n^2 \beta_n \bar{\tau}(t) \int p_n  f_t^n
d\nu_{\beta_\cdot} -\gamma n^2 \cD^p_{n}(f_t^n)
\end{split}
\end{equation}
Recall that 
$q_n = \sum_{i=1}^n r_i$,  then the time integral of the second term on the RHS of \eqref{eq:dtH} gives
\begin{equation}
  \label{eq:37}
  \begin{split}
    n^2 \beta_n \int_0^t ds \; \bar{\tau}(s)\int p_n f_s^n
    d\nu_{\beta_\cdot} = \beta_n \int_0^t ds\; \bar{\tau}(s)\int
    \mathcal L_n^{\bar\tau(s)} q_n f_s^n d\nu_{\beta_\cdot} \\
    = \beta_n \bar{\tau}(t)\int q_n f_t^n d\nu_{\beta_\cdot} - 
    \beta_n \bar{\tau}(0)\int q_n f_0^n d\nu_{\beta_\cdot} -
    \beta_n \int_0^t ds\; \bar{\tau}'(s)\int
    q_n f_s^n d\nu_{\beta_\cdot}
  \end{split}
\end{equation}
By the entropy inequality, for any $a_1>0$, using the first of the conditions \eqref{eq:V},
\begin{equation}
  \label{eq:4}
  \begin{split}
    \int |q_n| f_s^n d\nu_{\beta_\cdot} \le \frac 1{a_1} \log \int
    e^{a_1|q_n|} d\nu_{\beta_\cdot} + \frac 1{a_1} H_n(s) 
    \le  \frac 1{a_1} \log \int
    \prod_{i=1}^n e^{a_1|r_i|} d\nu_{\beta_\cdot} + \frac 1{a_1} H_n(s) \\
    \le   \frac 1{a_1} \sum_{i=1}^n \log \int \left(e^{a_1 r_i} + e^{-a_1 r_i} \right) d\nu_{\beta_\cdot} + \frac 1{a_1} H_n(s) \\
    =  \frac 1{a_1} \sum_{i=1}^n \left(\mathcal G(a_1,\beta_i) + \mathcal G(-a_1,\beta_i) - 2 \mathcal G(0,\beta_i) \right) 
    + \frac 1{a_1} H_n(s) 
    \le n C(a_1, \beta_{\cdot} ) +  \frac 1{a_1} H_n(s) 
  \end{split}
\end{equation}
We 
apply \eqref{eq:4} to the three terms of the RHS of \eqref{eq:37}. So after this time
integration we can estimate, for any $a_1>0$,
\begin{equation}
  \label{eq:5}
  \begin{split}
    n^2 \beta(1) \left|\int_0^t ds\ \bar{\tau}(t) \int p_n f_t^n
      d\nu_{\beta_\cdot} \right| \le \frac {\beta(1)
      K_{\bar\tau}}{a_1} \left(H_n(t) + H_n(0) + \int_0^t H_n(s) ds  \right) \\
+ n (2+t) \beta(1) K_{\bar\tau} C(a_1, \beta_{\cdot} ) 
  \end{split}
\end{equation}
where $K_{\bar\tau} = \sup_{s>0} \left(|\bar\tau(s)| + |\bar\tau'(s)|\right)$.

By integration by part and Schwarz inequality, for any $a_2>0$ we have
\begin{equation*}
  \begin{split}
    \left|n\sum_{i=1}^{n-1}\nabla_n \beta(i/n) \int V'(r_{i+1}) p_i
      f_t^n d\nu_{\beta_\cdot}\right|  = 
 \left|n\sum_{i=1}^{n-1} \frac{\nabla_n \beta(i/n)}{\beta(i/n)} \int V'(r_{i+1}) \partial_{p_i}
      f_t^n d\nu_{\beta_\cdot}\right|\\
\le \frac 1{2a_2}
    \sum_{i=1}^{n-1}\frac{(\nabla_n \beta(i/n))^2}{\beta_i} \int
    V'(r_{i+1})^2 f_t^n d\nu_{\beta_\cdot} 
    + \frac {a_2 n^2}{2} \tilde{\cD}^p_{n}(f_t^n)
  \end{split}
\end{equation*}

By our assumptions on $\beta(\cdot)$ and assumption \eqref{eq:45} on $V$, 
we have that for some constant $C_{\beta_\cdot} >0$
 depending on $\beta(\cdot)$ and $V$,
\begin{equation}\label{eq:energyb1}
  \sum_{i=1}^{n-1}\frac{(\nabla_n \beta(i/n))^2}{\beta_i} 
    V'(r_{i+1})^2 \le C_{\beta_\cdot}\sum_{i=1}^{n-1}
    V'(r_{i+1})^2 \le C_{\beta_\cdot} C_1 \sum_{i=1}^{n} \left(V(r_i) +1\right) 
\end{equation}
By the entropy inequality, for any $\delta$ such that  $0< \delta < \inf_y \beta(y)$, there exists a finite constant 
$C_{\delta,\beta_\cdot}$ depending on $V,\delta$ and $\beta(\cdot)$ such that:
\begin{equation}\label{eq:energyb2}
  \begin{split}
    \sum_{i=1}^{n}\int V(r_i) f_t^n d\nu_{\beta_\cdot} &\le \frac
    1\delta \log \int e^{\delta \sum_{i=1}^{n}\int V(r_i)}
    d\nu_{\beta_\cdot} + \frac 1\delta H_n(t)\\
    & = \frac
    1\delta \sum_{i=1}^{n} \left(\mathcal G(0,\beta_i - \delta) - \mathcal G(0,\beta_i)\right) + \frac 1\delta H_n(t)
  \le C_{\delta,\beta_\cdot} n + \frac 1\delta H_n(t) 
  \end{split}
\end{equation}

At this point we have obtained the following inequality, for some constant $C$ not depending on $n$,
\begin{equation}\label{eq:ebp}
  \begin{split}
    H_n(t) - H_n(0) \le - n^2 \left(\gamma - \frac {a_2}2\right)
   \int_0^t  \cD^p_{n}(f_s^n) ds + \left(\frac{C_{\beta_\cdot}}{2 a_2 \delta} +
     \frac{\beta(1) K_{\bar\tau}}{a_1}\right) \int_0^t H_n(s) ds \\
   +  \frac{\beta(1) K_{\bar\tau}}{a_1} 
   \left(H_n(t) + H_n(0)\right) + n c(a_1,a_2,\delta, \bar \tau, \beta_\cdot)
  \end{split}
\end{equation}
consequently, choosing $a_2=\gamma$ and $a_1= 2 \beta(1) K_{\bar\tau}$, we have
\begin{equation}\label{eq:eb2}
  H_n(t) \le 3 H_n(0) + C' \int_0^t H_n(s) ds + cn - n^2 \gamma \int_0^t
   \cD^p_{n}(f_s^n) ds 
\end{equation}
where $C'$ and $c$ are constants independent of $n$. Given the initial
bound on $H_n(0) \le cn$, by Gronwall inequality we have for some $c''$ independent on $n$:
\begin{equation}
H_n(t)
\le c''e^{C't} n\label{eq:entropyb}.
\end{equation}
Inserting this in \eqref{eq:eb2} we obtain, for some $\tilde C$ independent of n,
\begin{equation}
  \label{eq:1}
  {\gamma}\int_0^t \cD^p_{n}(f_s^n) ds \le
  \frac{\tilde C}{n} 
\end{equation}

The bound \eqref{eq:1} gives only informations about the distribution
of the velocities, but we actually need a corresponding bound of the
distribution of the positions. 



In appendix D we prove that, as a consequence of \eqref{eq:1}, we have
\begin{equation}
  \label{eq:33}
  I_n(t) \le \frac Cn \qquad \forall t>0.
\end{equation}

Consequently
\begin{equation*}
  \begin{split}
     \cD_{n}^r(f_t^n) = I_n(f_t^n) - \tilde{\cD}_{n}^p(f_t^n) -   
     2\int \frac{\partial_{q} f_t^n \tilde\bdot \partial_{p} f_t^n}{f_t^n} 
      d\nu_{\beta\cdot} \\
      \le  \frac Cn - \tilde{\cD}_{n}^p(f_t^n)  -  2\int \frac{\partial_{q} f_t^n \tilde\bdot \partial_{p} f_t^n}{f_t^n} 
      d\nu_{\beta\cdot} \\
    \le \frac Cn -  \tilde{\cD}_{n}^p(f_t^n) + 2 \tilde{\cD}_{n}^p(f_t^n) + \frac 12 \cD_{n}^r(f_t^n) 
  \end{split}
\end{equation*}
that gives
\begin{equation*}
  \cD_{n}^r(f_t^n) \le 2   \tilde{\cD}_{n}^p(f_t^n) + \frac {2C}n
\end{equation*}
Since we have already the bound \eqref{eq:1}, \eqref{eq:20} follows. \end{proof}


\section{Characterization of the limit points}
\label{sec:limitpoint}

Define the empirical measure 
$$
\pi_t^n(dx):=\frac{1}{n}\sum_{i=1}^n r_i(t)\delta_{i/n}(dx) .
$$
and we use the notation, for a given smooth function $G:[0,1]\to \mathbb{R}$,
$$
\langle \pi_t^n,G \rangle := \frac{1}{n}\sum_{i=1}^n G\left(\frac{i}{n}\right) r_i(t)
$$

Computing the time derivative we have:
\begin{equation}
\langle \pi_t^n,G \rangle- \langle \pi_0^n, G\rangle
= \int_0^t \frac{1}{n}\sum_{i=1}^n G\left(\frac{i}{n}\right)\mathcal{L}_n^{\tilde{\tau}(t)}r_i(t)
\end{equation}
Since
\begin{equation*}
\mathcal{L}_n^{\tilde{\tau}(t)}r_i = n^2(p_i-p_{i-1}), \qquad i=1,\dots,n, \quad p_0=0,
\end{equation*}
after performing a summation by parts, we obtain 
\begin{equation}\label{empevo}
\mathcal{L}_n^{\bar{\tau}(t)}\langle \pi^n_t, G\rangle
= -\sum_{i=1}^{n-1} \nabla_n G \left(\frac{i}{n}\right) p_i(t) + n p_n(t) G(1) .
\end{equation}
  where $\nabla_n G$ is defined by \eqref{eq:39}. 
We define also 
$$
\nabla_n^* G \left(\frac{i}{n}\right)=
n\left[G\left(\frac{i-1}{n}\right)-G\left(\frac{i}{n}\right)\right] 
\qquad i = 2, \dots, n. 
$$


Now observe that
\begin{equation}
\begin{split}
 \mathcal{L}_n^{\bar{\tau}(t)}&\left[\frac{1}{n^2}\sum_{i=1}^{n-1} \nabla_n G
\left(\frac{i}{n}\right)p_i - \frac 1n p_n G(1)\right]  
= - \gamma\sum_{i=1}^{n-1} \nabla_n G\left(\frac{i}{n}\right) p_i  + \gamma n
p_n G(1) \\
&+ \sum_{i=1}^{n-1} \nabla_n G\left(\frac{i}{n}\right) 
(V'(r_{i+1})-V'(r_i)) - n G\left(1\right) (\bar\tau(t) - V'(r_n)) \\
=& - \gamma\sum_{i=1}^{n-1} \nabla_n G\left(\frac{i}{n}\right) p_i  + \gamma n
p_n G(1) \\
&+\frac 1n \sum_{i=2}^{n-1} \nabla_n^* \nabla_n G\left(\frac{i}{n}\right)
V'(r_{i+1}) + \nabla_n G\left(\frac{n-1}{n}\right) V'(r_n)  
- \nabla_n G\left(\frac{1}{n}\right) V'(r_1)\\
&- n G\left(1\right) (\bar\tau(t) - V'(r_n))
\end{split}
\end{equation}
Recall that, by the weak formulation of the macroscopic equation, cf. \eqref{eq:weak}, 
it is enough to consider test functions $G$ such that $G(1) = 0$ and
$G'(0) = 0$. This takes care of the last term on the RHS of the above expression and in \eqref{empevo}, 
and putting these two expression together and dividing by $\gamma$, we obtain
\begin{equation}\label{eq:exact}
  \begin{split}
    \mathcal{L}_n^{\bar{\tau}(t)}\langle \pi^n, G\rangle =
     \frac 1{\gamma n} \sum_{i=2}^{n-1} (-\nabla_n^* \nabla_n) 
    G\left(\frac{i}{n}\right) V'(r_{i+1}) - 
   \gamma^{-1} \nabla_n G\left(\frac{n-1}{n}\right) V'(r_n) \\
    +\gamma^{-1} \nabla_n G\left(\frac{1}{n}\right) V'(r_1) +  
    \mathcal{L}_n^{\bar{\tau}(t)}\frac{1}{\gamma n^2}\sum_{i=1}^{n-1} \nabla_n G
    \left(\frac{i}{n}\right)p_i
  \end{split}
\end{equation}
It is easy to show, by using the entropy inequality, that the last two terms are negligible.
In fact, since $G'(0)=0$ we have that $|\nabla_n G\left(\frac{1}{n}\right)|\le C_G n^{-1}$. Furthermore 
\begin{equation*}
  \int e^{\alpha |V'(r)| - \beta_1 V(r)} dr < +\infty \qquad \forall
  \alpha >0.
\end{equation*}
Then, using the entropy inequality we have for any $\alpha>0$:
\begin{equation}\label{eq:bb0}
  \begin{split}
    \int \left|\gamma^{-1} \nabla_n G\left(\frac{1}{n}\right) V'(r_1) \right| f^n_s d\nu_{\beta_\cdot}
    \le \frac{C_G}{n\gamma}\int |V'(r_1)| f^n_s d\nu_{\beta_\cdot} \\
    \le \frac{C_G}{n\gamma \alpha} \int e^{\alpha |V'(r_1)|} d\nu^n_{\beta_\cdot}
    + \frac{C_G}{n\gamma \alpha} H_n(s) \le \frac{C(\alpha)}n + \frac{C'}{\alpha}
  \end{split}
\end{equation}
that goes to $0$ after taking the limit as $n\to\infty$ then
$\alpha\to\infty$. 
About the last term of the RHS in \eqref{eq:exact}, after time
integration we have to estimate
\begin{equation*}
  \int \frac{1}{\gamma n^2}\sum_{i=1}^{n-1} \left|\nabla_n G
    \left(\frac{i}{n}\right)\right| |p_i| f^n_s d\nu_{\beta_\cdot} 
\end{equation*}
for $s=0,t$. By similar use of the entropy inequality it follows that also this term disappear when $n\to\infty$.

To deal with the second term of the RHS of \eqref{eq:exact}, we need
the following lemma:
\begin{lem}\label{bd}
  \begin{equation}
    \label{eq:7}
    \lim_{n\to\infty} \mathbb E\left( \left|\int_0^t \int \left( V'(r_n(s)) - \bar\tau(s)\right) ds\right| \right) = 0
  \end{equation}
\end{lem}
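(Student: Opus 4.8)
The plan is to show that the time-integrated boundary term $\int_0^t (V'(r_n(s)) - \bar\tau(s))\,ds$ is negligible in $L^1$ by exhibiting it as $n^2\mathcal L_n^{\bar\tau(s)}$ applied to a suitable microscopic observable, up to controlled error terms. The natural candidate is $p_n$ itself: from the equations of motion, $n^{-2}\mathcal L_n^{\bar\tau(s)} p_n = \bar\tau(s) - V'(r_n) - \gamma p_n$, so that
\begin{equation*}
  \int_0^t \left(V'(r_n(s)) - \bar\tau(s)\right) ds = - \gamma \int_0^t p_n(s)\,ds - n^{-2}\int_0^t \mathcal L_n^{\bar\tau(s)} p_n(s)\,ds.
\end{equation*}
The second term on the right, after using the forward equation and integrating by parts in time (picking up the $\bar\tau'$ contribution exactly as in \eqref{eq:37}), becomes $n^{-2}$ times boundary evaluations $p_n(t), p_n(0)$ plus a time integral of $p_n(s)$; all of these are controlled in expectation by the entropy inequality together with the bound $H_n(s)\le c e^{C's}n$ from Proposition \ref{hypoentropy}, exactly in the style of \eqref{eq:4}--\eqref{eq:5}, and each carries a prefactor $n^{-2}$ (or $n^{-1}$ after the entropy estimate), hence vanishes as $n\to\infty$.

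The remaining — and genuinely substantial — term is $\gamma\int_0^t p_n(s)\,ds$, which must be shown to vanish in $L^1$. Here a single bound on $\mathbb E\int_0^t p_n(s)^2\,ds$ is \emph{not} available at the boundary site alone; instead I would exploit the Fisher information bounds. The idea is to write $p_n$ as (a telescoping sum controlled by) $\partial_{p_i}$-type gradients: by the entropy inequality,
\begin{equation*}
  \mathbb E\left(\int_0^t |p_n(s)|\,ds\right) \le \frac{1}{\lambda}\log \mathbb E_{\nu_{\beta\cdot}}\!\left(e^{\lambda \int_0^t |p_n|\,ds}\right) + \frac{1}{\lambda}\, t\sup_s H_n(s),
\end{equation*}
which alone only gives $O(1)$, not $o(1)$. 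A better route is to replace $p_n$ by a local average $\frac1k\sum_{j=n-k}^{n} p_j$ at the cost of $\gamma^{-1}$ times differences $p_j - p_{j-1} = n^{-2}\mathcal L_n^{\bar\tau(s)} r_j$, which are again exact time-derivatives and are negligible after time integration with the $n^{-2}$ prefactor; then the averaged quantity $\frac1k\sum_j p_j$ has its $L^2(ds\,d\nu)$-norm controlled by $k^{-1}\mathcal D^p_n$ up to equipartition corrections $\frac1k\sum_j(p_j^2-\beta_j^{-1})$, and $\int_0^t \mathcal D^p_n(f^n_s)\,ds \le C/n$ by \eqref{eq:20}. Taking $k=k(n)\to\infty$ slowly then forces the bound to $0$.

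I expect the main obstacle to be precisely this estimate on $\gamma\int_0^t p_n(s)\,ds$: one must simultaneously localize around the boundary, trade velocity differences for exact generator terms (which is clean), and then dispose of the equipartition remainder $\frac1k\sum_{j=n-k}^n (p_j^2 - \beta_j^{-1})$ — the latter requires an additional one-block-type argument using $\mathcal D^p_n$ to replace $p_j^2$ by its conditional expectation $\beta_j^{-1}$ on a block of size $k$, which is exactly the kind of local equilibrium estimate that the Fisher information bounds in Proposition \ref{hypoentropy} are designed to supply. All other terms are routine applications of the entropy inequality combined with $H_n(s)\le Ce^{C's}n$, the finiteness of exponential moments of $V'(r)$ and $|r|$ under $\nu_{\beta\cdot}$ guaranteed by \eqref{eq:V} and \eqref{eq:45}, and the $n^{-2}$ gain from writing observables as $\mathcal L_n$-images.
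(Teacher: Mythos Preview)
Your first identity is exactly the right starting point, but you then take a long detour around what is in fact a one-line observation. You already wrote down $p_j - p_{j-1} = n^{-2}\mathcal L_n^{\bar\tau} r_j$; summing this over $j=1,\dots,n$ (with $p_0=0$) gives
\[
  p_n \;=\; n^{-2}\,\mathcal L_n^{\bar\tau}\, q_n, \qquad q_n=\sum_{j=1}^n r_j,
\]
so the ``genuinely substantial'' term $\gamma\int_0^t p_n(s)\,ds$ is itself an exact generator image. Equivalently, combining with your first step,
\[
  V'(r_n)-\bar\tau(s) \;=\; -\,n^{-2}\,\mathcal L_n^{\bar\tau(s)}\!\left(p_n+\gamma q_n\right),
\]
and after time integration the stochastic dynamics give
\[
  \int_0^t\!\bigl(V'(r_n(s))-\bar\tau(s)\bigr)\,ds
  \;=\; \frac{p_n(0)-p_n(t)}{n^2} \;-\; \frac{\gamma}{n^2}\bigl(q_n(t)-q_n(0)\bigr)
  \;+\; \frac{\sqrt{2\gamma\beta_n^{-1}}}{n}\,w_n(t).
\]
Each piece is then controlled directly: $\mathbb E|p_n|\le C$ and $\mathbb E|q_n|\le Cn$ by the entropy inequality exactly as in \eqref{eq:4}, and the martingale term is $O(n^{-1})$ in $L^2$. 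This is precisely the paper's argument.

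Your proposed alternative --- localizing $p_n$ to a boundary block of size $k$, trading differences for generator terms, and closing via a one-block equipartition estimate on $\tfrac1k\sum_{j}(p_j^2-\beta_j^{-1})$ using $\mathcal D_n^p$ --- is not wrong in spirit, but it is far heavier than needed and the boundary one-block step is only sketched. The point is that no averaging, no Fisher information, and no local-equilibrium input are required here: the $n^{-2}$ gain applies to the \emph{entire} expression $p_n+\gamma q_n$, not just to $p_n$.
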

\begin{proof}

Observe that
\begin{equation}\label{eq:boundyn}
  V'(r_n) - \bar\tau(s) = - \frac 1{n^2} \mathcal L^{\bar\tau(s)} p_n - \gamma p_n = - \frac 1{n^2} \mathcal L^{\bar\tau(s)} (p_n + \gamma q_n). 
\end{equation}
Then after time integration:
\begin{equation*}
  \begin{split}
    \int_0^t  \left( V'(r_n(s)) - \bar\tau(s)\right) ds
    = \frac 1{n^2} \left( p_n(0)  -  p_n(t) \right) - 
    \frac{\gamma}{n^2}  (q_n(t) - q_n(0)) + \frac{\sqrt{2 \gamma \beta_n}}{n} w_n(t).
  \end{split}
\end{equation*}

It is easy to show that, using similar estimate as \eqref{eq:37} and \eqref{eq:4},
 the expectation of the absolute value of
 the right hand side of the above expression converges to $0$   as $n\to\infty$.
\end{proof}
It follows that
\begin{equation}
  \label{eq:41}
  \lim_{n\to\infty}  \mathbb E\left(\left|\int_0^t \left(\nabla_n G\left(\frac{n-1}{n}\right)  V'(r_n(s))   
- G'(1) \bar\tau(s)\right) ds\right|\right) = 0.
\end{equation}


We are finally left to deal with the first term of the RHS of
\eqref{eq:exact}. We will proceed as in \cite{GPV}. For any
$\varepsilon >0$ define
\begin{equation}
  \label{eq:34}
  \bar r_{i,\varepsilon} = \frac 1{2n \varepsilon +1}\sum_{|j-i|\le
    n\varepsilon} r_j, \qquad n\varepsilon < i < n (1-\varepsilon). 
\end{equation}
We first prove that the boundary terms are negligible:
\begin{lem}\label{lem-boundary}
  \begin{equation}
  \label{eq:35}
  \lim_{\varepsilon \to 0} \lim_{n\to\infty}\int_0^t \int \left|\frac
   1{\gamma n} \left(\sum_{i=2}^{[n\varepsilon]} +
     \sum_{i=[n(1-\varepsilon)] +1}^{[n-1]} \right) 
    (-\nabla_n^* \nabla_n) G\left(\frac{i}{n}\right) 
     V'(r_{i+1}) \right|\;
    f_s^n\; d\nu_{\beta_\cdot} \; ds = 0
\end{equation}
\end{lem}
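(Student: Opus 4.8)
The plan is to bound the boundary sum by the total potential energy of the chain, which is already controlled uniformly on $[0,t]$ by Proposition~\ref{hypoentropy} together with \eqref{eq:energyb2}, and to gain the missing smallness from the fact that the two boundary blocks together contain only a fraction $\approx 2\varepsilon$ of the sites.

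First I would record the uniform bound on the discrete Laplacian: since $G$ is $C^2$ on $[0,1]$ one has $-\nabla_n^*\nabla_n G(i/n)\to G''(i/n)$, and in particular there is $M_G<\infty$ with $\sup_{2\le i\le n-1}\bigl|\nabla_n^*\nabla_n G(i/n)\bigr|\le M_G$ for all $n$. Writing $B_\varepsilon^n=\{2,\dots,[n\varepsilon]\}\cup\{[n(1-\varepsilon)]+1,\dots,n-1\}$, so that $|B_\varepsilon^n|\le 2n\varepsilon+C$, I bring the absolute value inside the sum and reduce the quantity in \eqref{eq:35} to
\begin{equation*}
  \frac{M_G}{\gamma}\int_0^t\! ds\;\frac1n\sum_{i\in B_\varepsilon^n}\int |V'(r_{i+1})|\,f_s^n\,d\nu_{\beta_\cdot}.
\end{equation*}

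Next I would trade the derivative of the potential for the potential itself with a tunable small prefactor: by \eqref{eq:45} and Young's inequality, for every $\eta>0$ there is $C_\eta<\infty$ such that $|V'(r)|\le \eta\,V(r)+C_\eta$ for all $r$. Substituting this and enlarging the sum to all sites in the energy term gives the bound
\begin{equation*}
  \frac{M_G}{\gamma}\int_0^t\! ds\left(\eta\,\frac1n\sum_{j=1}^n\int V(r_j)\,f_s^n\,d\nu_{\beta_\cdot}+\frac{|B_\varepsilon^n|}{n}\,C_\eta\right)
  \le \frac{M_G}{\gamma}\,\eta\,C_t' + \frac{M_G}{\gamma}\,t\,\Bigl(2\varepsilon+\tfrac{C}{n}\Bigr)C_\eta ,
\end{equation*}
where $C_t'<\infty$ is the bound on $\int_0^t \frac1n\sum_j\int V(r_j)f_s^n\,d\nu_{\beta_\cdot}\,ds$ coming from \eqref{eq:energyb2} and $H_n(s)\le C_t n$. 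Letting $n\to\infty$ and then $\varepsilon\to0$ leaves $\tfrac{M_G}{\gamma}\eta C_t'$; since the left side of \eqref{eq:35} does not depend on $\eta$ and $\eta>0$ is arbitrary, it must equal $0$.

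The only (minor) obstacle is that the crude estimate $|V'(r)|\lesssim 1+V(r)$ combined with the total energy bound would give only an $O(1)$ control of the boundary term, not one vanishing with $\varepsilon$; the device of the free parameter $\eta$ resolves this, since the genuinely small factor $2\varepsilon$ then multiplies the constant $C_\eta$ while the unavoidable $O(1)$ energy term carries the adjustable prefactor $\eta$, and one optimizes over $\eta$ at the end. In particular none of the Fisher-information bounds in \eqref{eq:20} are needed for this lemma — they enter only in the treatment of the bulk term.
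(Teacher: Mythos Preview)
Your argument is correct. The paper proceeds a little differently: it first uses $|V'(r)|\le C_2|r|+C_0$ to reduce to $\frac1n\sum_{i\in B_\varepsilon^n}|r_i|$, and then applies the entropy inequality directly to that boundary sum with a free parameter $\alpha$, obtaining
\[
\frac1n\sum_{i\in B_\varepsilon^n}\int|r_i|\,f_t^n\,d\nu_{\beta_\cdot}\le \frac1{n\alpha}\sum_{i\in B_\varepsilon^n}\bigl(\mathcal G(\alpha,\beta_i)+\mathcal G(-\alpha,\beta_i)-2\mathcal G(0,\beta_i)\bigr)+\frac{H_n(t)}{n\alpha}\le C'\varepsilon\alpha+\frac{C}{\alpha},
\]
and chooses $\alpha=\varepsilon^{-1/2}$. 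So in the paper the $\varepsilon$ comes from the reference-measure term over $\sim n\varepsilon$ sites, while the entropy term is $O(1/\alpha)$; in your argument the $\varepsilon$ comes from the constant $C_\eta$ term over the $\sim 2n\varepsilon$ boundary sites, while the potentially large energy term is tamed by the prefactor $\eta$ after enlarging the sum to all sites and invoking \eqref{eq:energyb2}. The two proofs are structurally the same ``two-term balance with a free parameter'' trick; your route has the minor advantage of reusing the already-established global energy bound rather than redoing the entropy inequality on the boundary block, whereas the paper's route makes the $\varepsilon^{1/2}$ rate explicit.
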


\begin{proof}
  For simplicity of notation let us estimate just one side. Since our conditions on $V$ imply that 
  $|V'(r)| \le C_2 |r| + C_0$, we only need to prove that for any $t \ge 0$:
  \begin{equation}
    \label{eq:42}
    \lim_{\varepsilon \to 0} \lim_{n\to\infty} \int \frac 1{n} \sum_{i=2}^{[n\varepsilon]} |r_{i}| \;
    f_t^n\; d\nu_{\beta_\cdot} = 0
  \end{equation}
By the entropy inequality we have:
\begin{equation*}
  \begin{split}
    \int \frac 1{n} \sum_{i=2}^{[n\varepsilon]} |r_{i}| \; f_t^n\; d\nu_{\beta_\cdot} \le
    \frac{1}{n\alpha} \log \int \prod_{i=2}^{[n\varepsilon]} e^{\alpha |r_i|} d\nu_{\beta_\cdot} + \frac{H_n(t)}{\alpha n}\\
    \le \frac{1}{n\alpha} \sum_{i=2}^{[n\varepsilon]} 
\left(\mathcal G(\alpha,\beta_i) + \mathcal G(-\alpha,\beta_i) - 2 \mathcal G(0,\beta_i)\right) + \frac C\alpha
  \end{split}
\end{equation*}
Since $\mathcal G(\alpha,\beta_i) + \mathcal G(-\alpha,\beta_i) - 2 \mathcal G(0,\beta_i) \le C' \alpha^2$, 
for a constant $C'$ independent on $i$, we have
\begin{equation*}
  \int \frac 1{n} \sum_{i=2}^{[n\varepsilon]} |r_{i}| \; f_t^n\; d\nu_{\beta_\cdot} \le C' \varepsilon \alpha + \frac C\alpha,
\end{equation*}
 and by choosing $\alpha = \varepsilon^{-1/2}$ \eqref{eq:42} follows.
\end{proof}


We are only left to show that
\begin{equation}
  \label{eq:6}
  \begin{split}
   \lim_{\varepsilon \to 0} \lim_{n\to\infty} \int_0^t \int 
   \left|\frac
   1{\gamma n} \sum_{i=[n\varepsilon]+1}^{[n(1-\varepsilon)]} 
    (-\nabla_n^* \nabla_n) G\left(\frac{i}{n}\right) 
    \left( V'(r_{i+1}) - \bm \tau(\bar r_{i,\varepsilon}, \beta_i) \right) \right|\;
    f_s^n\; d\nu_\cdot \; ds = 0
  \end{split}
\end{equation}
Thanks to the bound \eqref{eq:20}, we are now in the same position as
in the proof of the over-damped dynamics, as considered in \cite{GPV},
and by using similar argument as used there (the so called
one-block/two blocks) \eqref{eq:6} follows. A slight difference is due
to the dependence of $\bm \tau$ on $\beta_i$, but since this changes
very slowly and smoothly in space it is easy to consider microscopic
blocks of size $k$ with constant temperature inside.

At this point the proof of
theorem \autoref{theo1} follows by standard arguments.
Let $Q_n$ the probability distribution of $\pi^n_\cdot$ on 
$\mathcal C([0,T], \mathcal M([0,1])$, 
where $\mathcal M([0,1])$ are the signed 
measures on $[0,1]$. In appendix B we prove that the sequence $Q_n$ is
compact. Then, by the above results any limit point $Q$ of
$Q_n$ is concentrated on absolutely continuous measures with densities
$\bar r(y,t)$ such that for any $0\le t\le T$, 
\begin{equation}
\begin{split}
  \label{eq:36}
  \mathbb E^Q\Big|\int_0^1 G(y) &\left(\bar r(y,t) -\bar r(y,0) \right)
      dy\\
      &- \gamma^{-1}\int_0^t ds \left[\int_0^1 G{''}(y)  
        \bm \tau(\bar r(y,s), \beta(y)) dy - G'(1) \bar\tau(s)\right]
  \Big| = 0
\end{split}
\end{equation}
Furthermore in appendix A we prove that $Q$ is concentrated on
densities that satisfy the regularity condition to have uniqueness of
the solution of the equation.

\section{Appendix A: Proof of the regularity bound \ref{eq:2a}}
\label{sec:appendix-b:-proof}

\begin{prop}
There exists a finite constant $C$ such that for any limit point
distribution $Q$ we have the bound:
  \begin{equation}
    \label{eq:9}
    \mathbb E^Q \left( \int_0^t ds \int_0^1 dx \left(\partial_x 
        \bm{\tau}(\bar r(s,x),\beta(x)) \right)^2 \right) < C .
  \end{equation}
\end{prop}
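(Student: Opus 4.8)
The plan is to exploit the Riesz (duality) characterization of the $L^2$ norm of a distributional spatial derivative, to express it through a microscopic functional, and then to control that functional with the entropy and Fisher information bounds of Proposition~\ref{hypoentropy}. Write $u(x,s):=\bm\tau(\bar r(x,s),\beta(x))$ for the (a priori random) macroscopic profile, and for $\psi$ smooth with compact support in the open domain $(0,t)\times(0,1)$ set
$$\ell_\psi(\bar r)\ :=\ -2\int_0^t ds\int_0^1 u(x,s)\,\partial_x\psi(x,s)\,dx\ -\ \int_0^t ds\int_0^1\psi(x,s)^2\,dx .$$
Then $\int_0^t ds\int_0^1(\partial_x u)^2\,dx=\sup_\psi\ell_\psi(\bar r)$ (an equality of extended reals, finite if and only if $\partial_x u\in L^2$), the supremum being over a countable family $\{g_j\}_{j\ge1}$, dense among such test functions and chosen to contain $g_1\equiv0$. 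Since then $k\mapsto\max_{j\le k}\ell_{g_j}$ is nonnegative and nondecreasing, monotone convergence reduces \eqref{eq:9} to producing one finite constant $C$, independent of $k$, with $\mathbb E^Q\big[\max_{j\le k}\ell_{g_j}(\bar r)\big]\le C$ for every $k$. Through the one--block estimate already established in Section~\ref{sec:limitpoint} (which lets us replace $\bm\tau(\bar r_{i,\varepsilon},\beta_i)$, hence $u$, by $V'(r_{i+1})$ in the limit), together with the usual uniform integrability coming from the entropy bound \eqref{eq:entropyb} and the growth conditions \eqref{eq:bv2}, \eqref{eq:45} on $V$, this in turn follows from a bound on $\mathbb E^{Q_n}\big[\max_{j\le k}\ell^n_{g_j}\big]$, uniform in $n$, where $\ell^n_\psi$ is the microscopic functional obtained by substituting $\frac1n\sum_i\partial_x\psi(i/n,s)V'(r_{i+1}(s))$ for $\int_0^1 u(x,s)\partial_x\psi(x,s)\,dx$.

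The core estimate is the bound on $\mathbb E^{Q_n}[\ell^n_\psi]$ for a single $\psi$, which must be uniform in $\psi$. Summation by parts in $i$ replaces $\frac1n\sum_i\partial_x\psi(i/n,s)V'(r_{i+1})$ by $-\sum_i\psi(i/n,s)\big(V'(r_{i+1})-V'(r_i)\big)$, the boundary contributions vanishing because $\psi$ is compactly supported in the interior and the discretisation errors $\partial_x\psi\leftrightarrow\nabla_n\psi$ being $O(n^{-1})$. Next, the adjoint of $\partial_{q_i}=\partial_{r_i}-\partial_{r_{i+1}}$ with respect to $\nu_{\beta_\cdot}$ gives the identity $\int\partial_{q_i}f\,d\nu_{\beta_\cdot}=\int\big(\beta_iV'(r_i)-\beta_{i+1}V'(r_{i+1})\big)f\,d\nu_{\beta_\cdot}$, hence
$$\int\big(V'(r_{i+1})-V'(r_i)\big)f^n_s\,d\nu_{\beta_\cdot}=-\beta_i^{-1}\int\partial_{q_i}f^n_s\,d\nu_{\beta_\cdot}+\beta_i^{-1}(\beta_i-\beta_{i+1})\int V'(r_{i+1})f^n_s\,d\nu_{\beta_\cdot}.$$
The first term is controlled by Cauchy--Schwarz against $\cD^r_n(f^n_s)$ with a weight of order $n^{-1}$: the factor $n$ this creates is exactly absorbed, after integration in $s$, by $\int_0^t\cD^r_n(f^n_s)\,ds\le C/n$ from Proposition~\ref{hypoentropy}, while the induced quadratic-in-$\psi$ piece is soaked up by $-\tfrac12\int_0^t\|\psi(\cdot,s)\|^2_{L^2}\,ds$. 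The second term contains the temperature gradient $\beta_i-\beta_{i+1}=-n^{-1}\nabla_n\beta(i/n)$, hence is itself of order $n^{-1}$; one further Cauchy--Schwarz together with $V'(r)^2\le C_1(1+V(r))$ from \eqref{eq:45} and the bound $\frac1n\sum_i\int V(r_{i+1})f^n_s\,d\nu_{\beta_\cdot}\le C$ following from \eqref{eq:energyb2}--\eqref{eq:entropyb} shows it too is absorbed into a constant plus $\tfrac12\int_0^t\|\psi(\cdot,s)\|^2_{L^2}\,ds$. Altogether $\mathbb E^{Q_n}[\ell^n_\psi]\le C$ with $C$ independent of both $n$ and $\psi$.

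The genuinely delicate point is the passage from this per--$\psi$ bound to the bound on $\mathbb E^{Q_n}[\max_{j\le k}\ell^n_{g_j}]$, since a maximum may not be pulled inside an expectation for free. This is done as in \cite{GPV,kipnis1999scaling}: by the entropy inequality applied to the path measures -- whose relative entropy with respect to the stationary process is $O(n)$ by \eqref{eq:31} -- combined with a Feynman--Kac / Rayleigh--Ritz estimate for $\tfrac1n\log\mathbb E\big[\exp(n\,\ell^n_{g_j})\big]$, after which $\exp(\max_j)\le\sum_j\exp$ turns the maximum into a sum at the price of an additive $\tfrac1n\log k$, killed by sending $n\to\infty$ before $k\to\infty$. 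I expect this step to be the main obstacle: the functional $\ell^n_\psi$ depends on the \emph{positions} $r_i$, whereas the symmetric part of the generator supplies only the \emph{velocity} Dirichlet form, so the Feynman--Kac estimate is not available from the bare dynamics; it is precisely the entropic hypocoercivity of Appendix~D -- the bound $I_n(t)\le C/n$, which transfers velocity gradients into position gradients -- that makes it go through, and does so stably enough to absorb the boundary tension and the temperature gradient. Everything else is summation by parts, Cauchy--Schwarz, and the a priori estimates of Proposition~\ref{hypoentropy}.
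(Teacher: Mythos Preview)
Your per-$\psi$ estimate via the identity $\int(V'(r_{i+1})-V'(r_i))f^n_s\,d\nu_{\beta_\cdot}=-\beta_i^{-1}\int\partial_{q_i}f^n_s\,d\nu_{\beta_\cdot}+(\text{gradient term})$ and the bound $\int_0^t\cD^r_n\le C/n$ is correct, but it yields only a bound on $\mathbb E^{Q_n}[\ell^n_\psi]$: the identity is an integration by parts on the density $f^n_s$ and has no pathwise analogue. The passage to $\mathbb E^{Q_n}[\max_{j\le k}\ell^n_{g_j}]$ via Feynman--Kac then genuinely fails. The Rayleigh bound reads $\sup_f\{\int V_sf\,d\nu_{\beta_\cdot}-n\gamma\cD^p_n(f)\}$ with only the \emph{velocity} Dirichlet form, because that is all the symmetric part $n^2\gamma\cS_n$ of the generator produces; for your $V_s$, a function of positions alone and unbounded (since $V'$ is), this supremum over position-only densities is $+\infty$. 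The hypocoercive bound $I_n(t)\le C/n$ does not rescue this: it is a statement about the specific solution $f^n_t$ of the forward equation, not a spectral estimate valid for arbitrary $f$, and Appendix~D supplies no substitute for the missing position Dirichlet form in the variational formula. So this route is blocked, and you have correctly identified the obstacle without a way around it.

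The paper sidesteps the whole issue by a different and much shorter device: the dynamical identity $V'(r_{i+1})-V'(r_i)=n^{-2}\cL^{\bar\tau}_n(p_i+\gamma q_i)$ for $i<n$ (compare \eqref{eq:boundyn}). Testing against a purely spatial $F(x)$ with $F(0)=0$ and integrating in time turns $\int_0^t\big[\frac1n\sum_i\nabla_n^*F(i/n)\,V'(r_i)+F(1)\bar\tau\big]\,ds$ into the boundary-in-time difference $\frac1{n^2}\sum_iF(i/n)\big[(p_i+\gamma q_i)(t)-(p_i+\gamma q_i)(0)\big]$ plus a martingale of quadratic variation $O(n^{-1}\|F\|_{L^2}^2)$ and a negligible remainder. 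Using $\sum_iq_i^2\le n^2\sum_ir_i^2$ one gets the \emph{pathwise} estimate $\big|\frac{\gamma}{n^2}\sum_iF(i/n)q_i(s)\big|\le\gamma\,\|F\|_{L^2}\big(\frac1n\sum_ir_i(s)^2\big)^{1/2}$: the linear functional is bounded by $\|F\|_{L^2}$ times a random variable whose second moment is controlled by the entropy bound \eqref{eq:entropyb}. Because the bound is linear in $\|F\|$ with an $F$-independent random constant, the supremum over $F$ can be taken inside the expectation for free, and \eqref{eq:9} follows with no Feynman--Kac machinery. The position Fisher information $\cD^r_n$ plays no role in this argument.
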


\begin{proof}
  It is enough to prove that for any function $F \in \mathcal
  C^1([0,1])$ such that $F(0) = 0$ the following inequality holds:
  \begin{equation}
    \label{eq:10}
    \mathbb E^Q \left( \int_0^t ds \left[\int_0^1 dx F'(x)
        \bm{\tau}(\bar r(s,x),\beta(x)) - F(1)\bar\tau(s)\right]
    \right) \le C \left(\int_0^1 F(x)^2 dx\right)^{1/2} .
  \end{equation}
In fact by a duality argument, since $\bm{\tau}(\bar r(s,1),\beta(1)) = \bar\tau(s)$,  we have:
\begin{equation*}
  \begin{split}
    \int_0^1 dx \left(\partial_x \bm{\tau}(\bar r(s,x),\beta(x)) \right)^2 =
    \sup_{F \in \mathcal C^1([0,1])} \frac{\int_0^1 dx F'(x) \bm{\tau}(\bar r(s,x),\beta(x)) - F(1)\bar\tau(s)}{\int_0^1 F(x)^2 dx}.
  \end{split}
\end{equation*}
Observe that \eqref{eq:10} corresponds to a choice of test functions $G(x)$ in  \eqref{eq:weak}
 such that $G' = F$. In order to obtain \eqref{eq:10}, compute
\begin{equation*}
  \begin{split}
    \frac 1{n^2} \mathcal L_n^{\bar\tau} \sum_{i=1}^n F(i/n) (p_i +
    \gamma q_i) 
    = \sum_{i=1}^n F(i/n) A_n^{\bar\tau} p_i \\
    = \sum_{i=1}^{n-1} F(i/n) \left(V'(r_{i+1}) - V'(r_i)\right) 
    + F(1) \left(\bar\tau - V'(r_n)\right) \\
    = \frac 1n \sum_{i=2}^{n} \nabla_n^* F(i/n) V'(r_i)  + F(1)
    \bar\tau - F(1/n) V'(r_1)
  \end{split}
\end{equation*}
and after time integration and averaging over trajectories we have
\begin{equation}\label{eq:reg}
  \begin{split}
    \frac{1}{n^2} \int \sum_{i=1}^n F(i/n) (p_i +
    \gamma q_i) (f^n_t - f^n_0) d\nu_{\beta_\cdot} \\
    = \int_0^t ds \int \frac 1n \sum_{i=2}^{n} \nabla_n^* F(i/n) V'(r_i) f^n_s
    d\nu_{\beta_\cdot} +  F(1)\int_0^t \bar\tau(s)\; ds \\
    - F(1/n) \int_0^t ds \int V'(r_1) f^n_s d\nu_{\beta_\cdot}.
  \end{split}
\end{equation}
It is easy to see that, since $F(0) = 0$ and differentiable, the last
term of the right hand side is negligible as $n\to\infty$, by the same argument used in \eqref{eq:bb0}.

About the first term on the RHS of \eqref{eq:reg}, 
by the results of \autoref{sec:limitpoint}, it converges,
through subsequences, to
\begin{equation*}
  -\mathbb E^Q \left( \int_0^t ds \int_0^1 dx F'(x)
        \bm{\tau}(\bar r(s,x),\beta(x))\right).
\end{equation*}
About the left hand side of \eqref{eq:reg}, one can see easily that
\begin{equation*}
   \frac{1}{n^2} \int \sum_{i=1}^n F(i/n) p_i (f_t^n - f_0^n)
   d\nu_{\beta_\cdot} \mathop{\longrightarrow}_{n\to\infty} 0 .
\end{equation*}
Using the inequality $\sum_i q_i^2 \le n^2 \sum_i r_i^2$,
we can bound the other term of the LHS of \eqref{eq:reg} by observing that, for $s=0,t$, 
\begin{equation*}
  \begin{split}
     \left|\frac{\gamma}{n} \int \sum_{i=1}^n F(i/n) \frac{q_i}{n} f_s^n d\nu_{\beta_\cdot} \right| \le
   \gamma\left(\frac 1n\sum_{i=1}^n F(i/n)^2\right)^{1/2} 
   \left(\int\frac 1n\sum_{i=1}^n \frac{q_i^2}{n^2} f_s^n d\nu_{\beta_\cdot} \right)^{1/2} \\
   \le \gamma\left(\frac 1n\sum_{i=1}^n F(i/n)^2\right)^{1/2} 
   \left(\int\frac 1n\sum_{i=1}^n r_i^2 f_s^n d\nu_{\beta_\cdot}\right)^{1/2} \le C\gamma \left(\frac
     1n\sum_{i=1}^n F(i/n)^2\right)^{1/2}.
  \end{split}
\end{equation*}
Since $F$ is a continuous function on $[0,1]$ the rhs of the above expression is bounded in $n$ 
and converges to the $L^2$ norm of $F$ as $n\to\infty$. Thus \eqref{eq:10} follows.
\end{proof}

\section{Appendix B: Compactness}
\label{sec:append-c:-comp}

We prove in this section that the sequence of probability distributions $Q_n$ 
on $\mathcal C([0,t], \mathcal M)$ induced by $\pi_n$ is tight. Here $\mathcal M$ is the space 
of the signed measures on $[0,1]$ endowed by the weak convergence topology.
This tightness is consequence of the following statement. 
\begin{prop}
  For any function $G\in \mathcal C^1([0,1])$ such that $G(1)=0$, $G'(0)=0$
and any $\ve>0$ we have
  \begin{equation}
    \label{eq:11}
    \lim_{\delta\to 0} \limsup_{n\to\infty} \mathbb P^{\mu_0} 
    \left[ \sup_{0\le s<t\le T, |s-t| <\delta} \left| 
        <\pi_n(t),G> -  <\pi_n(s),G> \right| \ge \ve \right] = 0
  \end{equation}
\end{prop}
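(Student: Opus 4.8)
The goal is an Aldous–type tightness criterion for the rescaled interparticle–distance empirical process tested against a fixed $G$ with $G(1)=0$, $G'(0)=0$. The natural route is to use the decomposition of $\langle \pi^n_t,G\rangle$ obtained in \autoref{sec:limitpoint}: from \eqref{empevo} and \eqref{eq:exact}, for such a $G$ we can write, for $0\le s<t\le T$,
\begin{equation*}
  \langle\pi^n_t,G\rangle - \langle\pi^n_s,G\rangle
  = \int_s^t \Big[\frac 1{\gamma n}\sum_{i=2}^{n-1}(-\nabla_n^*\nabla_n)G(i/n)V'(r_{i+1})
   - \gamma^{-1}\nabla_n G(\tfrac{n-1}n)V'(r_n) + \gamma^{-1}\nabla_n G(\tfrac1n)V'(r_1)\Big]du
  + M^n_{s,t} + R^n_{s,t},
\end{equation*}
where $M^n_{s,t}$ is the increment of the martingale part coming from the exact term $\mathcal L_n^{\bar\tau}\big(\frac1{\gamma n^2}\sum_i \nabla_n G(i/n)p_i\big)$ and $R^n_{s,t}$ is the corresponding boundary/finite–difference remainder, both of which were already shown to be negligible. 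So the strategy is: (i) control the bounded–variation (drift) term by an $L^1$ bound uniform in time; (ii) control the martingale term by its quadratic variation; (iii) assemble via Markov's inequality and Aldous' criterion.

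\emph{Step (i): the drift term.}
Since $G\in\mathcal C^1$ with $G'(0)=0$ and $G(1)=0$, we have $|\nabla_n^*\nabla_n G(i/n)|\le \|G''\|_\infty$ (for $G$ smooth; for merely $\mathcal C^1$ one first approximates $G$, or uses that only $\mathcal C^2$ functions are really needed for tightness of this piece), $|\nabla_n G(\tfrac1n)|\le C_G n^{-1}$, and $|\nabla_n G(\tfrac{n-1}n)|\le C_G$. Using $|V'(r)|\le C_0+C_2|r|$ together with the entropy inequality and the entropy bound $H_n(u)\le Cn$ from Proposition \ref{hypoentropy}, exactly as in \eqref{eq:4} and \eqref{eq:bb0}, one gets a constant $C$ (depending on $G,T$ but not on $n$) with
\begin{equation*}
  \mathbb E\,\Big| \int_s^t \Big[\tfrac 1{\gamma n}\sum_{i}(-\nabla_n^*\nabla_n)G(i/n)V'(r_{i+1}) + (\text{boundary terms})\Big]\,du\Big| \le C\,(t-s).
\end{equation*}
Hence the drift contribution to the supremum in \eqref{eq:11} is $\le C\delta$ in $L^1$, so by Markov its probability of exceeding $\ve/3$ tends to $0$ as $\delta\to0$, uniformly in $n$.

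\emph{Step (ii): the martingale term and conclusion.}
The martingale $N^n_t := \frac1{\gamma n^2}\sum_{i=1}^{n-1}\nabla_n G(i/n)\,p_i(t) - (\text{its }t=0\text{ value}) - \int_0^t \mathcal L_n^{\bar\tau(u)}(\cdots)\,du$ has quadratic variation governed by the noise in \eqref{eq:eds1}: $\langle N^n\rangle_t = \frac{2\gamma}{\gamma^2 n^4}\sum_{i=1}^{n-1}(\nabla_n G(i/n))^2\,n^2\beta_i^{-1}\,t \le C\,n^{-1}t$, which is $O(n^{-1})$ uniformly on $[0,T]$; likewise the "exact" algebraic term $\frac1{\gamma n^2}\sum_i\nabla_n G(i/n)p_i$ is itself $O(n^{-1/2})$ in $L^2$ by the same entropy/Gaussian estimates used for the last term of \eqref{eq:exact}. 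By Doob's $L^2$ inequality $\mathbb E\sup_{u\le T}|N^n_u|^2\le C n^{-1}\to0$, and the same for the $R^n$ pieces; thus $\sup_{|t-s|<\delta}|M^n_{s,t}+R^n_{s,t}|\to0$ in probability as $n\to\infty$, for every fixed $\delta$. Combining (i) and (ii): choose first $n$ large to kill the martingale/remainder part below $\ve/3$ with high probability, then $\delta$ small to kill the drift part; this yields \eqref{eq:11}. Tightness of $Q_n$ on $\mathcal C([0,T],\mathcal M)$ then follows from \eqref{eq:11} together with the (trivial, from the entropy bound and $|V'|\le C_0+C_2|r|$, giving $\mathbb E|\langle\pi^n_t,G\rangle|\le C$) pointwise control, via the standard tightness criterion for measure–valued càdlàg processes tested against a dense countable family of $G$'s.

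\emph{Main obstacle.}
None of the individual estimates is hard — they are the same entropy–inequality manipulations already used in \autoref{sec:limitpoint}. The one point requiring a little care is the boundary term $\gamma^{-1}\nabla_n G(\tfrac{n-1}n)V'(r_n)$, whose prefactor is $O(1)$ rather than $o(1)$: here one must not bound it crudely but rather recognize, as in Lemma \ref{bd} and \eqref{eq:boundyn}, that $V'(r_n)-\bar\tau(s)$ is itself a total $\mathcal L_n$–derivative plus noise, so that its time integral over $[s,t]$ is $O(n^{-2})$ plus a martingale of quadratic variation $O(n^{-1})$; the leftover $\int_s^t\gamma^{-1}\nabla_n G(\tfrac{n-1}n)\bar\tau(u)\,du$ is manifestly $O(t-s)$ and goes into the drift bound of Step (i). With that observation the estimate is routine.
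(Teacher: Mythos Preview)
Your decomposition is the same as the paper's (there labeled $I_1,\dots,I_5$), and you correctly single out the right-boundary term and the identity \eqref{eq:boundyn} as the only nontrivial ingredient. The gap is in the passage from pointwise-in-$(s,t)$ estimates to the uniform modulus that the proposition actually asks for. In Step~(i) you prove $\mathbb E\big|\int_s^t[\text{drift}]\,du\big|\le C(t-s)$ for each fixed $s,t$ and then assert that the drift contribution to the \emph{supremum} is $\le C\delta$ in $L^1$; this does not follow, because the supremum sits inside the expectation. (Invoking Aldous' criterion with fixed or stopping times would give tightness in $D$, not the $C$-modulus stated.) The paper fixes this by Cauchy--Schwarz in the time variable, $\sup_{|t-s|<\delta}\big|\int_s^t X\,du\big|\le\delta^{1/2}\big(\int_0^T X^2\,du\big)^{1/2}$, reducing to the single random variable $\int_0^T\frac1n\sum_i V'(r_i(u))^2\,du$, which is then controlled via the entropy inequality and \eqref{eq:45}.

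The same slip recurs for $R^n$ (the paper's $I_4$): a fixed-time $L^2$ bound on $\frac1{\gamma n^2}\sum_i\nabla_nG(i/n)\,p_i(t)$ does not control $\sup_{t\le T}|\cdot|$, and here there is genuine extra work. The paper writes $p_i(t)$ through the variation-of-constants formula for the damped SDE (cf.~\eqref{eq:12} and \eqref{eq:forI4}) and bounds the resulting $e^{-\gamma n^2(t-u)}$-convolutions and stochastic integrals uniformly in $t$ via Schwarz and Doob. Your ``main obstacle'' paragraph also drops a term: $\int_s^t(V'(r_n)-\bar\tau)\,du$ contains, besides the $n^{-2}(p_n(s)-p_n(t))$ and noise pieces, the contribution $-\gamma\int_s^t p_n(u)\,du$, which is not $O(n^{-2})$; the paper handles it by the same Schwarz-in-time device together with the velocity Fisher bound $\int_0^T\int(\partial_{p_n}f^n_u)^2/f^n_u\,d\nu_{\beta_\cdot}\,du\le C/n$ to control $\int_0^T\mathbb E(p_n^2)\,du$.
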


\begin{proof}
By doing similar calculations as done in \autoref{sec:limitpoint} (see \eqref{empevo} and following ones)
  \begin{equation*}
    \begin{split}
      & <\pi_n(t),G> - <\pi_n(s),G> = -\int_s^t du \sum_{i=1}^{n-1}
      \nabla_n G \left(\frac{i}{n}\right) p_i(u) \\
     & = \int_s^t du \frac 1{\gamma n} \sum_{i=2}^{n-1}
      (-\nabla_n^*\nabla_n) G\left(\frac 1n\right) V'(r_{i+1}(u))
      - \int_s^t du \frac 1{\gamma}
      \nabla_n G\left(\frac {n-1}n\right) V'(r_n(u))\\
     &\qquad + \int_s^t du \frac 1{\gamma}
      \nabla_n G\left(\frac {1}n\right) V'(r_1(u))
      + \frac 1{\gamma n^2} \sum_{i=2}^{n-1}
      \nabla_n G\left(\frac in\right) (p_i(t) - p_i(s))\\
      &\qquad + \frac 1n \sum_{i}^n \sqrt{2\gamma\beta_j^{-1}} \nabla_n
      G\left(\frac in\right) \left(w_i(t) - w_i(s)\right)\\
      &\qquad := I_1(s,t) + I_2(s,t) + I_3(s,t) + I_4(s,t) + I_5(s,t)
    \end{split}
  \end{equation*}
We treat the corresponding 5 terms separately.
The term $I_3 =  \int_s^t du \frac 1{\gamma}
      \nabla_n G\left(\frac {1}n\right) V'(r_1(u))$ is the easiest to estimate, since $G'(0)= 0$, and using Schwarz inequality we have
\begin{equation*}
  \begin{split}
    \sup_{0\le s<t\le T, |s-t| <\delta}|I_3(s,t)| &\le \sup_{0\le
      s<t\le T, |s-t| <\delta} \frac{C}{n\gamma} \int_s^t |V'(r_1(u))| du \\
    &\le \sup_{0\le s<t\le T, |s-t| <\delta} \frac{C}{n\gamma} |t-s|^{1/2} \left(\int_s^t |V'(r_1(u))|^2 du\right)^{1/2} \\
    &\le \frac {C \delta^{1/2}}{n\gamma} \left(\int_0^T
      |V'(r_1(u))|^2 du\right)^{1/2}.
  \end{split}
\end{equation*}
Since, by entropy inequality,
\begin{equation*}
  \begin{split}
    \mathbb E \left[\left(\int_0^T |V'(r_1(u))|^2
        du\right)^{1/2}\right] \le \left[\int_0^T \mathbb
      E\left(|V'(r_1(u))|^2 \right) du\right]^{1/2} \\
    \le C \left[\int_0^T \mathbb E \left(\sum_{i=1}^n
        (V(r_i(u)) +1) \right) du\right]^{1/2} \le C T^{1/2} n^{1/2}
  \end{split}
\end{equation*}
so that 
\begin{equation*}
   \mathbb E \left[\sup_{0\le s<t\le T, |s-t| <\delta}|I_3(s,t)| \right] \le \frac
 {C\delta^{1/2} T^{1/2}}{\gamma n^{1/2}} \mathop{\longrightarrow}_{n\to\infty} 0.
\end{equation*}

About $I_2$, this is equal to 
\begin{equation}\label{eq:i2}
  - \frac 1{\gamma}
      \nabla_n G\left(\frac {n-1}n\right) \int_s^t du \left(V'(r_n(u))-
        \bar\tau(u) \right)
        - \frac 1{\gamma}
      \nabla_n G\left(\frac {n-1}n\right) \int_s^t du \bar\tau(u) 
\end{equation}
The second term of the above expression is trivially bounded by $C \delta$ since
$|t-s| \le \delta$. For the first term on the right hand side of \eqref{eq:i2}, by \eqref{eq:boundyn}, we have
\begin{equation*}
  \begin{split}
    \int_s^t du \left(V'(r_n(u))- \bar\tau(u) \right) =
    \frac{p_n(s) - p_n(t)}{n^2} - \gamma  \int_s^t p_n(u) du +
    \frac{\sqrt{2\gamma\beta_n^{-1}}}{n} \left(w_n(t) - w_n(s)\right) 
  \end{split}
\end{equation*}
The last term of the right hand side of the above is estimated by the
standard modulus of continuity of the Wiener process $w_n$. For the
second term of the right hand side, this is bounded by 
\begin{equation*}
  \begin{split}
    \mathbb E \left[\sup_{0\le s<t\le T, |s-t| <\delta} \gamma
      \left|\int_s^t p_n(u) du \right|\right] \le \gamma \delta^{1/2}
    \mathbb E \left[\left(\int_0^T p_n^2 (u)
        du\right)^{1/2}\right]\\
     \le \gamma \delta^{1/2} \left[\int_0^T  \mathbb E( p_n^2 (u))
        du\right]^{1/2} = \gamma \delta^{1/2} \left[\int_0^T  \mathbb E( p_n^2 (u) - \beta_n^{-1})  
        du + T \beta_n^{-1} \right]^{1/2}\\
      \le C \gamma \delta^{1/2} \left[\int_0^T \int p_n \partial_{p_n}
        f^n_u d\nu_{\beta_\cdot}  du + T \beta_n^{-1} \right]^{1/2} \\
      \le  C \gamma \delta^{1/2} \left[\left(\int_0^T \int p_n^2 f^n_u
          d\nu_{\beta_\cdot}  du \right)^{1/2} \left(\int_0^T \int
          \frac{(\partial_{p_n} f_u^n)^2}{f_u^n} d\nu_{\beta_\cdot} du \right)^{1/2}
        + T \beta_n^{-1} \right]^{1/2} \\
      \le  C' \gamma \delta^{1/2}
  \end{split}
\end{equation*}
where the last inequality is justified by the inequalities:
\begin{equation*}
  \begin{split}
    \int p_n^2 f^n_u d\nu_\cdot \le Cn\\
    \int_0^T \int
          \frac{(\partial_{p_n} f_u^n)^2}{f_u^n} d\nu_\cdot du \le
          \frac Cn
  \end{split}
\end{equation*}

To deal with the first term we have to prove that 
\begin{equation}\label{eq:122}
  \begin{split}
    \lim_{n\to\infty} \mathbb E\left(\sup_{0\le t\le T}
      \frac 1{n^2}|p_n(t)|\right)  = 0
  \end{split}
\end{equation}
Since
\begin{equation}
  \label{eq:12}
  \begin{split}
    \frac{p_n(t)}{n^2} = \frac 1{n^2} p_n(0) e^{-\gamma n^2t} +
    \int_0^t e^{-\gamma n^2 (t-u)} \left[\bar\tau(u) -
      V'(r_n(u))\right] du \\
    + \sqrt{2\gamma\beta^{-1}_n}\frac 1n \int_0^t e^{-\gamma n^2 (t-u)}
    dw_n(u)
  \end{split}
\end{equation}
The stochastic integral is easy to estimate by Doob's inequality:
\begin{equation*}
  \mathbb E\left(\sup_{0\le t\le T} \left|\sqrt{2\gamma\beta^{-1}_n}\frac 
    1n \int_0^t e^{-\gamma n^2 (t-u)} dw_n(u)\right|^2\right) \le
\frac{CT}{n^2}  
\end{equation*}
About the second term, by Schwarz inequality we have that
\begin{equation*}
  \begin{split}
    \mathbb E\sup_{0\le t\le T} &\left|\int_0^t e^{-\gamma n^2 (t-u)}
      \left[\bar\tau(u) - V'(r_n(u))\right] du\right| \\
    &\le \frac{1}{n\sqrt{2\gamma}} \left( \int_0^T \mathbb
      E\left(\left[\bar\tau(u) - V'(r_n(u))\right]^2\right)
      du\right)^{1/2}
  \end{split}
\end{equation*}
and by the entropy bound we have
\begin{equation*}
  \mathbb E\left(\left[\bar\tau(u) -
      V'(r_n(u))\right]^2\right) \le Cn
\end{equation*}
so that this term goes to zero like $n^{-1/2}$. The first term in
\eqref{eq:12} is trivial to estimate. This conclude the estimate of $I_2$.

The estimation of $I_4$  is similar to the proof of \eqref{eq:122}, but
require a little extra work. We need to prove that 
\begin{equation}\label{eq:forI4}
   \lim_{n\to\infty} \mathbb E\sup_{0\le t\le T} \left| \frac 1{n^2} \sum_{i=2}^{n-1}
     \nabla_n G\left(\frac in\right) p_i(t) \right| = 0.
\end{equation}
 By the evolution equations we have
 \begin{equation*}
   \begin{split}
     \frac 1{n^2} \sum_{i=2}^{n-1}
     \nabla_n G\left(\frac in\right) p_i(t) = \frac 1{n^2} \sum_{i=2}^{n-1}
     \nabla_n G\left(\frac in\right) p_i(0)e^{-\gamma n^2 t} \\
     + \int_0^t ds\ e^{-\gamma n^2 (t-s)}  \frac 1{n} \sum_{i=3}^{n-1}
     \nabla_n^*\nabla_n G\left(\frac in\right) V'(r_i(s)) \\
     + \int_0^t ds\ e^{-\gamma n^2 (t-s)}  
     \left(\nabla_n G\left(1\right) V'(r_n(s)) - \nabla_n G\left(\frac
       2n\right) V'(r_2(s)) \right) 
   \end{split}
 \end{equation*}
and all these terms can be estimated as in the proof of
\eqref{eq:122}, so that \eqref{eq:forI4} follows.

Also $I_5$ can be easily estimated by Doob inequality and using the
independence of $w_i(t)$. 

Finally estimating $I_1$, notice that since $G$ is a smooth function, it can be bounded by
\begin{equation}
  \label{eq:43}
  \begin{split}
    \sup_{0\le s<t\le T, |s-t| <\delta} |I_1(s,t)| &\le \frac C{\gamma
      n} \sup_{0\le s<t\le T, |s-t| <\delta} \int_s^t du
    \sum_{i=2}^{n-1} |V'(r_{i+1}(u))|\\
    &\le  \frac {C\delta^{1/2}}{\gamma} \left(\int_0^T \frac 1n  \sum_{i=2}^{n-1} |V'(r_{i+1}(u))|^2 du\right)^{1/2}
  \end{split}
\end{equation}
and, by entropy inequality
\begin{equation*}
  \mathbb E\left[\left(\int_0^T \frac 1n  \sum_{i=2}^{n-1} |V'(r_{i+1}(u))|^2 du\right)^{1/2}\right] \le
 \left[  \int_0^T \frac 1n  \sum_{i=2}^{n-1} \mathbb E \left( |V'(r_{i+1}(u))|^2\right) du \right]^{1/2} \le C, 
\end{equation*}
so that the expression in \eqref{eq:43} is negligible after $\delta\to 0$. 
\end{proof}
\section{Appendix C: Uniqueness of weak solutions}
\label{sec:append-c:-uniq}

\begin{prop}
  The weak solution of \eqref{eq:weak} is unique in the class of
  function such that
  \begin{equation}
    \label{eq:2}
    \int_0^t ds \int_0^1 \left(\partial_x\tau(r(x,s),
      \beta(x))\right)^2 dx < +\infty 
  \end{equation}
\end{prop}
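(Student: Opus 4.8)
I would prove uniqueness by the standard duality/energy method for degenerate-parabolic equations in divergence form, exploiting the strict monotonicity of the constitutive law $r \mapsto \bm\tau(r,\beta)$. Suppose $r^{(1)},r^{(2)}$ are two weak solutions in the class \eqref{eq:2} with the same initial data $r_0$ and the same boundary tension $\bar\tau$. Write $w(x,t) = r^{(1)}(x,t) - r^{(2)}(x,t)$ and $\psi(x,t) = \bm\tau(r^{(1)}(x,t),\beta(x)) - \bm\tau(r^{(2)}(x,t),\beta(x))$. Subtracting the two weak formulations \eqref{eq:weak}, for every test $G$ with $G(1)=0$, $G'(0)=0$ one gets
\begin{equation*}
  \int_0^1 G(x) w(x,t)\,dx = \gamma^{-1}\int_0^t ds \int_0^1 G''(x)\,\psi(x,s)\,dx ,
\end{equation*}
since the boundary term $G'(1)\bar\tau(s)$ cancels. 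Note that $\psi$ has the same spatial regularity as assumed in \eqref{eq:2}: indeed $\partial_x\psi = \partial_x\bm\tau(r^{(1)},\beta) - \partial_x\bm\tau(r^{(2)},\beta) \in L^2([0,1]\times[0,t])$.

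\textbf{The test function.} The key step is to choose, at each fixed time $s$, the test function $G = G_s$ solving the elliptic problem $G_s'' = w(\cdot,s)$ on $[0,1]$ with the boundary conditions $G_s(1)=0$, $G_s'(0)=0$ — explicitly $G_s(x) = -\int_x^1 \int_0^y w(z,s)\,dz\,dy$ up to the right normalization, i.e. $G_s' (x)= \int_0^x w(z,s)\,dz$ and $G_s(x) = -\int_x^1 G_s'(y)\,dy$. Inserting a time-differentiated version of the identity (or, cleaner, working with the primitive $W(x,t) = \int_0^t w(x,s)\,ds$ and testing against $G$ built from $W$ at the final time, as in Oleinik's uniqueness argument for porous-medium type equations) produces, after an integration by parts in $x$ using $G_s''=w$,
\begin{equation*}
  \frac12 \frac{d}{dt}\int_0^1 \Big(\int_0^x w(z,t)\,dz\Big)^2 dx = -\gamma^{-1}\int_0^1 w(x,t)\,\psi(x,t)\,dx .
\end{equation*}
Since $\bm\tau(\cdot,\beta)$ is strictly increasing in its first argument (it is $\partial_r\cF(r,\beta)$ with $\cF$ strictly convex, because $\cG$ is strictly convex in $\tau$), the right-hand side is $\le 0$: $w(x,t)\psi(x,t)\ge 0$ pointwise. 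With zero initial data the left side starts at $0$, hence $\int_0^x w(z,t)\,dz \equiv 0$ for a.e. $x,t$, so $w\equiv 0$.

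\textbf{Main obstacle.} The delicate point is the low regularity: $w$ is only known to be in $L^\infty$ (or $L^2$) in $x$ and the solutions are merely weak, so $G_s$ as defined is only $C^1$ with $G_s'' \in L^2$, which is exactly the regularity \eqref{eq:weak} demands of test functions, but the time-differentiation step and the manipulation $\frac{d}{dt}\int (\int_0^x w)^2$ must be justified by a mollification/approximation argument in time (regularize $w$ in $t$, run the computation, pass to the limit using \eqref{eq:2} to control $\psi$). One must also check that $x\mapsto \int_0^x w(z,t)\,dz$ is an admissible object and that the boundary terms generated in the integration by parts vanish — they do, precisely because of the conditions $G(1)=0$, $G'(0)=0$ and because $\psi(\cdot,s)$ vanishes at $x=1$ (both solutions have tension $\bar\tau(s)$ there, in the trace sense guaranteed by \eqref{eq:2}). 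Handling these approximation and trace issues rigorously is the bulk of the work; the algebraic identity and the monotonicity are immediate.
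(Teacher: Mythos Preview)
Your proposal is correct and follows essentially the same route as the paper: both arguments compute the time derivative of $\int_0^1\big(\int_0^x w(z,t)\,dz\big)^2\,dx$ and close with the strict monotonicity of $r\mapsto\bm\tau(r,\beta)$. The only procedural difference is that the paper, instead of inserting a solution-dependent test function $G_s$ with $G_s''=w$, first uses a mollified test function $G_\lambda(y,x)=1-\int_{-\infty}^y \lambda g(\lambda(z-x))\,dz$ in \eqref{eq:weak} and lets $\lambda\to\infty$ to obtain the pointwise identity $\partial_t R_j(x,t)=\gamma^{-1}\partial_x\bm\tau(r_j(x,t),\beta(x))$ for $R_j(x,t)=\int_0^x r_j(y,t)\,dy$; this sidesteps the time-regularity and approximation issues you flag, since the energy computation is then a direct differentiation of $\int_0^1(R_1-R_2)^2\,dx$ followed by one integration by parts in $x$.
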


\begin{proof}
Let $g(x)\ge 0$ a smooth function with compact support contained in
$[-1/4, 1/4]$ such that $\int_{\mathbb R} g(y) dy =1$. Then for
$\lambda>0$ large enough, define the function
\begin{equation*}
  G_\lambda(y,x) = 1 - \int_{-\infty}^{y}\lambda g(\lambda (z-x)) dz
\end{equation*}
Then for $1/(4\lambda) <x < 1- 1/(4\lambda)$, we have $G_\lambda(0,x)
= 0$ and $\partial_y G_\lambda(1,x)=0$, and it can be used as test
function in \eqref{eq:weak}. So if $r(x,t)$ is a solution in the given
class, we have
\begin{equation*}
  \int_0^1 G_\lambda(y,x) \left(r(y,t) - r_0(y)\right) dx 
  = \gamma^{-1}\int_0^t ds  \left[\int_0^1 \lambda g(\lambda(y-x)) 
    \partial_y\mathbf{\tau}(r(y,s), \beta(y)) dy \right].
\end{equation*}
Letting $\lambda \to +\infty$ we obtain:
\begin{equation*}
  \int_0^x  \left(r(y,t) - r_0(y)\right) dx 
  = \gamma^{-1}\int_0^t ds 
    \partial_y\mathbf{\tau}(r(x,s), \beta(x)), \qquad \forall x\in (0,1).
\end{equation*}

Let $r_1(x,t), r_2(x,t)$ two solutions in the class considered, and define
\begin{equation*}
  R_j(x,t) = \int_0^x r_j(y,t) dy, \qquad j=1,2.
\end{equation*}
By the approximation argument done at the beginning of the proof,
 we have that
\begin{equation*}
  \partial_t  R_j(x,t) = \gamma^{-1}\partial_x\tau(r_j(x,s),\beta(x))
\end{equation*}
for every $x\in (0,1)$ and $t>0$.

Since $\tau( r_j(1,t),\beta(1))  = \bar\tau(t)$, and since
$\tau(r,\beta)$ is a strictly increasing function of $r$,
\begin{equation*}
  \begin{split}
   & \frac{d}{dt} \int_0^1 \left( R_1(x,t) - R_2(x,t)\right)^2 dx\\ & = 
    2 \gamma^{-1} \int_0^1 \left( R_1(x,t) -
      R_2(x,t)\right) \partial_x  \left( 
     \tau( r_1(x,t),\beta(x))  - \tau( r_2(x,t),\beta(x))
   \right) dx \\
&= -2 \gamma^{-1} \int_0^1 \left( r_1(x,t) - r_2(x,t)\right) \left(
     \tau( r_1(x,t), \beta(x))  - \tau( r_2(x,t),\beta(x))
   \right) dx \le 0.
  \end{split}
\end{equation*}
\end{proof}

\section{Appendix D: proof of the entropic hypocoercive bound (4.16) }
\label{sec:appendix-d:-proof}




We will prove in this appendix that there exists constants $\lambda>0$ and $C>0$
independent of $n$ such that
\begin{equation}
  \label{eq:22}
  \frac{d}{dt} I_n(f) \le - \lambda n^2 I_n(f) + C n.
\end{equation}

We will use the following
commutation relations:
\begin{equation}
  \label{eq:comm}
 [\partial_{p_i}, \beta_j^{-1}\partial_{p_j}^*] = \delta_{i,j}, \qquad 
 [\partial_{p_i}, \mathcal A_n^\tau] = \partial_{q_i}, \qquad [\partial_{q_i},
  \mathcal A_n^\tau] = - (\partial_q^2 \mathcal V\; \partial_p)_i 
\end{equation}
where $\partial_q^2 \mathcal V$ is the corresponding hessian matrix of $\mathcal V = \sum_{i=1}^n V(r_n)$.

Denote $g_t = \sqrt{f^n_t}$ and observe that
\begin{equation}\label{eq:Ig}
  I_n(g_t^2) = 4 \int \left( |\partial_p g_t|_{\tdot}^2 + 
    |\partial_q g_t|_\tdot^2 
    + 2 \; \partial_q g_t \tdot \partial_p g_t \right) d\nu_{\beta\cdot}
\end{equation}
Recall that 

\begin{equation}
n^2\cA_n^{\tau,*} = -n^2\cA_n^{\tau} + B_n^{\tau}\label{eq:3}
\end{equation}
where 
$$
B_n^\tau = - n\sum_{i=1}^{n-1} \nabla_n \beta(i/n) p_iV'(r_{i+1}) +
      n^2\beta(1)p_n {\tau}
$$
Consequently $g_t$ solves the equation:
\begin{equation*}
  \begin{split}
    \partial_t g = - n^2 \cA_n^{\bar\tau(t)}g_t + n^2\gamma \cS_n g_t +
    n^2\gamma\frac{|\partial_p g_t|_\odot^2}{g_t} 
    + \frac 12 B_n^{\bar\tau(t)}  g_t
  \end{split}
\end{equation*}

We then compute the time derivative of $I_n(g_t^2)$ by considering the
three terms separately. The first one gives:

\begin{equation}\label{eq:Ip}
  \begin{split}
    \frac d{dt} \int |\partial_p g_t|_{\tdot}^2 \ d\nu_{\beta\cdot} =& -
    2 n^2\int \partial_p g_t \tdot \partial_p (\mathcal A^{\bar\tau(t)}
    g_t)  \ d\nu_{\beta\cdot}\\
    & - 2 n^2\gamma\int \partial_p g_t \tdot \partial_p (\partial_p^*
    \odot \partial_p 
     g_t)  \ d\nu_{\beta\cdot}\\
     &+ 2 n^2\gamma \int \partial_p g_t \tdot \partial_p
     \left(\frac{|\partial_p 
       g_t|^2_\odot}{ g_t}\right)  \ d\nu_{\beta\cdot}\\
   &+ \int \partial_p g_t \tdot \partial_p (B_n^{\bar\tau(t)}
    g_t)  \ d\nu_{\beta\cdot} .
  \end{split}
\end{equation}
By the commutation relations \eqref{eq:comm}, and using \eqref{eq:3},
 the first term on the RHS of \eqref{eq:Ip} is equal to
\begin{equation*}
  \begin{split}
    - 2 n^2 \int \partial_p g_t \tdot \partial_q g_t \
    d\nu_{\beta\cdot}  - 2 n^2\int \partial_p g_t \tdot \mathcal A^{\bar\tau(t)}
    \partial_p g_t  \ d\nu_{\beta\cdot}\\
    = - 2 n^2 \int \partial_p g_t \tdot \partial_q g_t \
    d\nu_{\beta\cdot}  - \int \partial_p g_t \tdot B_n^{\bar\tau(t)}
    \partial_p g_t  \ d\nu_{\beta\cdot}
  \end{split}
\end{equation*}

Then the RHS of \eqref{eq:Ip} is equal to
\begin{equation*}
  \begin{split}
   &- 2 n^2 \int \partial_p g_t \tdot \partial_q g_t \
    d\nu_{\beta\cdot}  - 2 n^2\gamma\int \partial_p g_t
    \tdot \partial_p (\partial_p^* 
    \odot \partial_p 
     g_t)  \ d\nu_{\beta\cdot}\\
     &+ 2 n^2\gamma \int \partial_p g_t \tdot \partial_p
     \left(\frac{|\partial_p 
       g_t|^2_\odot}{ g_t}\right)  \ d\nu_{\beta\cdot}
   + \int g_t \partial_p g_t \tdot \partial_p B_n^{\bar\tau(t)} \
   d\nu_{\beta\cdot} . 
  \end{split}
\end{equation*}
The last term of the above equation is equal to
\begin{equation}
  \label{eq:abt}
  \begin{split}
    \int g_t \partial_p g_t \tdot \partial_p B_n \ d\nu_{\beta\cdot} =
    - n\int g_t \sum_{i=1}^{n-1} \beta_i^{-1} \nabla_n\beta(\frac in)
    V'(r_{i+1}) \partial_{p_i} g_t \ d\nu_{\beta\cdot} 
  \end{split}
\end{equation}
Notice that the term involving $n^2 \tau p_n$ does not appear in the above expression, because the particular definition of $\tdot$.
For any $\alpha_1 >0$,  using Schwarz inequality, \eqref{eq:energyb1} and \eqref{eq:energyb2}, \eqref{eq:abt} is bounded by
\begin{equation*}
  \begin{split}
    \frac 1{2\alpha_1} \int g_t^2 \sum_{i=1}^{n-1}
    \frac{(\nabla_n\beta(\frac in))^2}{\beta_i} V'(r_{i+1})^2  \
    d\nu_{\beta\cdot}  + \frac{\alpha_1 n^2}2
  \int |\partial_p g_t|_{\tdot}^2 \ d\nu_{\beta\cdot}  \\
  \le \frac{Cn}{\alpha_1} +
  \frac{\alpha_1 n^2}2 
  \int |\partial_p g_t|_{\tdot}^2 \ d\nu_{\beta\cdot} 
  \end{split}
\end{equation*}
for a constant $C$ depending on $\beta_{\cdot}$ and the initial entropy, but independent of $n$. 

Computing the second term of the RHS of \eqref{eq:Ip} we have:
\begin{equation*}
  \begin{split}
    \int \partial_p g_t \tdot \partial_p (\partial_p^*
    \odot \partial_p g_t) \ d\nu_{\beta\cdot} = \int \sum_{j=1}^{n-1}
    \beta_j^{-1} |\partial_p \partial_{p_j} g|_{\odot}^2 \
    d\nu_{\beta\cdot} + \int |\partial_p g|_{\tdot}^2 \
    d\nu_{\beta\cdot} \\
    = \int \sum_{i=1}^{n}\sum_{j=1}^{n-1}
    \beta_j^{-1} \beta_i^{-1} (\partial_{p_i} \partial_{p_j} g)^2 \
    d\nu_{\beta\cdot} + \int |\partial_p g|_{\tdot}^2 \
    d\nu_{\beta\cdot} 
  \end{split}
\end{equation*}
About the third term on the RHS:
\begin{equation*}
  \begin{split}
    \partial_p g_t \tdot \partial_p \left(\frac{|\partial_p
        g_t|^2_\odot}{ g_t}\right) = \frac{2 \sum_{j=1}^{n-1} \sum_{i=1}^n
      \beta_j^{-1}\beta_i^{-1}  
      \;\partial_{p_j} g_t
      \; \partial_{p_i}g_t\; \partial_{p_i} \partial_{p_j} g_t}{g_t} -  
    \frac{|\partial_pg|_{\odot}^2 |\partial_pg|_{\tdot}^2}{g_t^2} 
  \end{split}
\end{equation*}
Summing all together we have obtained
\begin{equation}
  \label{eq:26}
  \begin{split}
     \frac d{dt} \int |\partial_p g_t|_{\tdot}^2 \ d\nu_{\beta\cdot}
     = - 2 n^2 \int \partial_p g_t \tdot \partial_q g_t \
    d\nu_{\beta\cdot} -  n^2\left(2\gamma - \frac {\alpha_1}2\right)
    \int |\partial_p g_t|_{\tdot}^2 \ 
    d\nu_{\beta\cdot}  \\
     - 2 n^2\gamma\int \sum_{j=1}^{n-1} \sum_{i=1}^n
    \beta_j^{-1}\beta_j^{-1} 
    \left(\partial_{p_i} \partial_{p_j} g_t -
      g_t^{-1}\partial_{p_i}g_t \partial_{p_j} 
    g_t\right)^2 \ d\nu_{\beta\cdot} + \frac{Cn}{\alpha_1} .
  \end{split}
\end{equation}

Now we deal with the derivative of the second term:
\begin{equation}
  \label{eq:24}
  \begin{split}
    \frac d{dt} \int |\partial_q g_t|_{\tdot}^2 \ d\nu_{\beta\cdot} =
    - 2 n^2\int \partial_q g_t \tdot \partial_q (\mathcal
    A^{\bar\tau(t)}
    g_t)  \ d\nu_{\beta\cdot}
     - 2 n^2\gamma\int \partial_q g_t \tdot \partial_q (\partial_p^*
    \odot \partial_p g_t)  \ d\nu_{\beta\cdot}\\
    + 2 n^2\gamma \int \partial_q g_t \tdot \partial_q
    \left(\frac{|\partial_p
        g_t|^2_\odot}{ g_t}\right)  \ d\nu_{\beta\cdot} 
    + \int \partial_q g_t \tdot \partial_q (B_n g_t) \
    d\nu_{\beta\cdot} \\
    =
    - 2 n^2\int \partial_q g_t \tdot \partial_q (\mathcal
    A^{\bar\tau(t)}
    g_t)  \ d\nu_{\beta\cdot} - 2 n^2\gamma\int \sum_{j=1}^{n-1}
    \sum_{i=1}^n 
    \beta_i^{-1}\beta_j^{-1} 
    \left(\partial_{p_i} \partial_{q_j} g -
      g_t^{-1}\partial_{p_i}g \partial_{q_j} 
    g\right)^2 \ d\nu_{\beta\cdot}\\
    + \int \partial_q g_t \tdot \partial_q (B_n g_t) \
    d\nu_{\beta\cdot} .
  \end{split}
\end{equation}
The first and the last term give:
\begin{equation*}
  \begin{split}
    - 2 n^2\int \partial_q g_t \tdot \partial_q (\mathcal
    A^{\bar\tau(t)} g_t) \ d\nu_{\beta\cdot} + \int \partial_q g_t
    \tdot \partial_q (B_n g_t) \ d\nu_{\beta\cdot}\\
    = 2 n^2\int \partial_q g_t \tdot (\partial_q^2 \mathcal
    V \partial_p) g_t \ d\nu_{\beta\cdot} + \int g_t \partial_q g_t
    \tdot \partial_q B_n \ d\nu_{\beta\cdot}
  \end{split}
\end{equation*}
The last term on the RHS of the above expression is equal to 
\begin{equation*}
  \begin{split}
    &\int g_t \partial_q g_t \tdot \partial_q B_n \ d\nu_{\beta\cdot}
    \\ = &
    n \sum_{i=2}^{n-1} \int \beta_i^{-1} g_t (\partial_{q_i}g_t)
    \left[\nabla_n \beta\left(\frac in\right) V''(r_{i+1}) p_i - \nabla_n
      \beta\left(\frac{i-1}n\right) V''(r_{i}) p_{i-1} \right] \
    d\nu_{\beta\cdot}\\
   &  + n  \int \beta_1^{-1} g_t (\partial_{q_1}g_t) \nabla_n \beta\left(\frac 1n\right) V''(r_{2}) p_1 .
  \end{split}
\end{equation*}
Since $V''$ and $\nabla_n \beta$ are bounded and $\beta(\cdot)$ is positive bounded away from $0$,
this last quantity is bounded for any $\alpha_2>0$ by  
\begin{equation*}
  n^2 \alpha_2 \int |\partial_q g_t|_\tdot^2  d\nu_{\beta\cdot} + C\alpha_2^{-1}
  \int \sum_{i=1}^{n-1} p_i^2 g^2_t  d\nu_{\beta\cdot} \le n^2 \alpha_2 \int
  |\partial_q g_t|_\tdot^2  d\nu_{\beta\cdot} + C'\alpha_2^{-1} n.
\end{equation*}
Since $V''$ is bounded, for any $\alpha_3 >0$ we have
\begin{equation*}
  2 n^2\int \partial_q g_t \tdot (\partial_q^2 \mathcal
    V \partial_p) g_t \ d\nu_{\beta\cdot} \le \alpha_3
    n^2\int |\partial_q g_t|^2_{\tdot}  \ d\nu_{\beta\cdot} + 
    \frac{|V''|_\infty^2 n^2}{\alpha_3} \int |\partial_p g_t|^2_{\tdot} \ d\nu_{\beta\cdot}
\end{equation*}
Putting all the terms together, the time derivative of the second term is bounded by
\begin{equation}
  \label{eq:27}
  \begin{split}
      \frac d{dt} & \int |\partial_q g_t|_{\tdot}^2 \ d\nu_{\beta\cdot}
      \le  (\alpha_2 + \alpha_3)
    n^2\int |\partial_q g_t|^2_{\tdot}  \ d\nu_{\beta\cdot} + 
    \frac{Cn^2}{\alpha_3} \int |\partial_p g_t|^2_{\tdot} \
    d\nu_{\beta\cdot}\\
    &  - 2 n^2\gamma\int \sum_{j=1}^{n-1} \sum_{i=1}^n
    \beta_i^{-1}\beta_j^{-1} 
    \left(\partial_{p_i} \partial_{q_j} g -
      g_t^{-1}\partial_{p_i}g \partial_{q_j} 
    g\right)^2 \ d\nu_{\beta\cdot} + C'\alpha_2^{-1} n
  \end{split}
\end{equation}

About the derivative of the third term, using the third of the commutation relations \eqref{eq:comm}, gives
\begin{equation}
  \label{eq:28}
  \begin{split}
    \frac d{dt} 2\int \partial_q g_t \tdot \partial_p g_t
    d\nu_{\beta\cdot} = -2n^2 \int \left[\partial_q (\mathcal A^{\tau(t)}g_t)
    \tdot \partial_p g_t + 
    \partial_q  g_t 
    \tdot \partial_p (\mathcal A^{\tau(t)}g_t)\right] d\nu_{\beta\cdot}\\
    +  \int \left[\partial_q (B_n g_t) \tdot \partial_p g_t
    + \partial_q g_t \tdot \partial_p (B_n g_t)
  \right]d\nu_{\beta\cdot}\\
  - 2n^2\gamma\int \left[\partial_q g_t \tdot \partial_p (\partial_p^*
    \odot \partial_p g_t) + \partial_q (\partial_p^*
    \odot \partial_p g_t) \tdot \partial_p g_t \right]\
  d\nu_{\beta\cdot}\\
  +2 n^2\gamma \int \left[\partial_q g_t \tdot \partial_p
    \left(\frac{|\partial_p g_t|^2_\odot}{ g_t}\right)  
    + \partial_q 
    \left(\frac{|\partial_p g_t|^2_\odot}{g_t}\right) \tdot \partial_p
    g_t\right] \ d\nu_{\beta\cdot} \\
 = 2n^2 \int (\partial_q^2\mathcal V \partial_p) g_t
    \tdot \partial_p g_t d\nu_{\beta\cdot} 
    - 2 n^2 \int |\partial_q g_t|^2_\tdot d\nu_{\beta\cdot} \\
    + \frac 12 \int g_t\left[\partial_q B_n \tdot \partial_p g_t
    + \partial_q g_t \tdot \partial_p B_n \right]d\nu_{\beta\cdot}\\
  - 4n^2\gamma\int\sum_{j=1}^{n-1} \sum_{i=1}^n \beta_i^{-1}\beta_j^{-1} 
    \left[\left(\partial_{p_i} \partial_{q_j} g\right)
      \left(\partial_{p_i} \partial_{p_j} g\right)\right]
    d\nu_{\beta\cdot}\\ 
    +2 n^2\gamma\int \sum_{j=1}^{n-1} \sum_{i=1}^n
    2\beta_i^{-1}\beta_j^{-1}  g_t^{-1}
    \left[(\partial_{p_i} \partial_{p_j} g) (\partial_{p_i} g_t) 
      (\partial_{q_j} g_t) + (\partial_{q_j} \partial_{p_i} g)
      (\partial_{p_j} g_t) (\partial_{p_i} g_t) \right]
    d\nu_{\beta\cdot}\\
    - 4n^2 \gamma\int \sum_{j=1}^{n-1} \sum_{i=1}^n
    \beta_i^{-1}\beta_j^{-1} g^{-2} 
    (\partial_{p_i} g_t)^2 (\partial_{q_j} g_t) (\partial_{p_j} g_t) 
    d\nu_{\beta\cdot}
  \end{split}
\end{equation}

The last three terms of the RHS of the \eqref{eq:28} can be written as
\begin{equation*}
  \begin{split}
    - 4n^2\gamma\int \sum_{j=1}^{n-1} \sum_{i=1}^n
    \beta_i^{-1}\beta_j^{-1}  
    \left[\left(\partial_{p_i} \partial_{q_j} g_t - g_t^{-1}\partial_{p_i} g_t 
      \partial_{q_j} g_t\right)
    \left(\partial_{p_i} \partial_{p_j} g_t - g_t^{-1} \partial_{p_i} g_t  
      \partial_{p_j} g_t\right) \right]
    d\nu_{\beta\cdot}
  \end{split}
\end{equation*}
so they combine with the corresponding terms coming from the time
derivative of the first two terms of $I_n$ giving an exact square.

The second term of \eqref{eq:28}, by the same arguments used before, can be bounded by
\begin{equation*}
  \begin{split}
    n^2 \alpha_4 \int |\partial_q g_t|^2_\tdot d\nu_{\beta\cdot} + 
    n^2 \alpha_5 \int |\partial_p g_t|^2_\tdot d\nu_{\beta\cdot}
    +  Cn (\alpha_4^{-1} + \alpha_5^{-1})
  \end{split}
\end{equation*}

 About the first term of \eqref{eq:28}, since $V''$ is bounded, it is
bounded by $V''_\infty n^2 \int |\partial_p g_t|^2_\tdot
d\nu_{\beta\cdot}$.

Putting all these bounds together we obtain that
\begin{equation*}
  \begin{split}
     \frac{d}{dt} I_n(f_t) \le - n^2 \kappa_p  \int |\partial_p
     g_t|^2_\tdot  d\nu_{\beta\cdot} - n^2 \kappa_q  \int |\partial_q
     g_t|^2_\tdot  d\nu_{\beta\cdot} - 2n^2 \int \partial_p g_t
     \tdot \partial_q g_t  d\nu_{\beta\cdot}  + Cn \\
     - 2 N^2 \gamma \int \sum_{j=1}^{n-1} \sum_{i=1}^n \beta_i^{-1}\beta_j^{-1}  
    \left[\left(\partial_{p_i} \partial_{q_j} g_t - g_t^{-1}\partial_{p_i} g_t 
      \partial_{q_j} g_t\right) +
    \left(\partial_{p_i} \partial_{p_j} g_t - g_t^{-1} \partial_{p_i} g_t  
      \partial_{p_j} g_t\right) \right]^2 
    d\nu_{\beta\cdot}
  \end{split}
\end{equation*}
with
\begin{equation*}
  \begin{split}
    \kappa_p &= 2\gamma - \frac{\alpha_1}2 - \frac C{\alpha_3} -
    \alpha_5 - V''_{\infty} \\
    \kappa_q &= 2 - \alpha_2 - \alpha_3 - \alpha_4
  \end{split}
\end{equation*}
By choosing $\alpha_2 + \alpha_3 +\alpha_4 \le 1$
we have obtained that for some constants $C_1, C_2 >0$ independent of $n$
\begin{equation*}
   \frac{d}{dt} I_n(f_t) \le - n^2 I_n(f_t) + C_1 n + C_2 \int
   |\partial_p g_t|^2_\tdot  d\nu_{\beta\cdot} .
\end{equation*}
By recalling that 
\begin{equation*}
  \int_0^t  ds \int |\partial_p g_s|^2_\tdot  d\nu_{\beta\cdot} \le \frac{C'}{n}
\end{equation*}
 after time integration we have for some constant $C_3$:
\begin{equation*}
   I_n(f_t) \le e^{- n^2 t} I_n(f_0) + \frac {C_3}{n} (1- e^{-n^2t}) 
\end{equation*}
that implies
\begin{equation}
  \label{eq:29}
  I_n(f_t) \le \frac {C_4}n
\end{equation}
for any reasonable initial conditions such that $I_n(f_0)$ is finite
and not growing too fast with $n$.


\begin{rem}
  An important example for understanding the meaning of a density with
  small $I_n$ functional, consider the inhomogeneous Gibbs density:
  \begin{equation}
    \label{eq:8}
    f = \exp\left(\sum_{i=1}^n \beta_i \tau_i r_i + \sum_{i=1}^{n-1} \frac 1n
      \nabla_n(\beta_i\tau_i) p_i\right) / \mathcal N
  \end{equation}
where $\mathcal N$ is a normalization constant.
In the case of constant temperature these densities play an
important role in the relative entropy method (cf
\cite{Tremoulet:2002p20753, olla2014micro2}), as to a non-constant
profile of tension corresponds a profile of small damped velocities
averages. Computing $I_n$ on $f$ we have
\begin{equation*}
  I_n(f) = \sum_{i=1}^{n-1} \left[\beta_i \tau_i - \beta_{i+1} \tau_{i+1} +
  \frac 1n  \nabla_n(\beta_i\tau_i)\right] = 0 .
\end{equation*}

\end{rem}

\nocite{*}
\bibliographystyle{plain}	
\bibliography{iso_grad}

\end{document}